\theoremstyle{plain}
\newtheorem{theorem}{Theorem}[section]
\newtheorem{lemma}[theorem]{Lemma}
\newtheorem{corollary}[theorem]{Corollary}
\numberwithin{equation}{section}
\theoremstyle{definition}
\newtheorem{definition}[theorem]{Definition}
\newtheorem{example}[theorem]{Example}
\newtheorem{proposition}[theorem]{Proposition}
\newtheorem{remark}[theorem]{Remark}
\theoremstyle{remark}
\newcommand{\RR}{\mathbb{R}}
\newcommand{\NN}{\mathbb{N}}
\begin{document}

\title[Invariant circles and phase portraits of cubic vector fields on sphere]{Invariant circles and phase portraits of cubic vector fields on the sphere}

\author[Joji B.]{Joji Benny}
\address{Department of Mathematics, Indian Institute of Technology Madras, India}
\email{jojikbenny@gmail.com}

\author[S. Jana]{Supriyo Jana}
\address{Department of Mathematics, Indian Institute of Technology Madras, India}
\email{supriyojanawb@gmail.com}

\author[S. Sarkar]{Soumen Sarkar}
\address{Department of Mathematics, Indian Institute of Technology Madras, India}
\email{soumen@iitm.ac.in}

\date{\today}
\subjclass[2020]{34A34, 34C14, 34C40, 34C45, 58J90}
\keywords{polynomial vector fields, Kolmogorov system, periodic orbit, invariant circle, invariant great circle, first integral, phase portrait}
\thanks{}

\abstract 
In this paper, we characterize and study dynamical properties of cubic vector fields on the sphere $\mathbb{S}^2 = \{(x, y, z) \in \RR^3  ~|~ x^2+y^2+z^2 = 1\}$. We start by classifying all degree three polynomial vector fields on $\mathbb{S}^2$ and determine which of them form Kolmogorov systems. Then, we show that there exist completely integrable cubic vector fields on $\mathbb{S}^2$ and also study the maximum number of various types of invariant circles for homogeneous cubic vector fields on $\mathbb{S}^2$. We find a tight bound in each case. Further, we also discuss phase portraits of certain cubic Kolmogorov vector fields on $\mathbb{S}^2$. 

\endabstract

\maketitle

\section{Introduction}
Let  $P, Q, R$ be polynomials in $\mathbb{R}[x,y, z]$. Then, the following system of differential equations
\begin{equation} \label{eq: I1}
 \frac{dx}{dt} = P(x, y, z), \frac{dy}{dt} = Q(x, y, z), \frac{dz}{dt} = R(x, y, z)  \end{equation}
is called a polynomial differential system in $\mathbb{R}^3$. The differential operator
\begin{equation}  \label{eq: I2}
 \mathcal{X} = P \frac{\partial}{\partial x} + Q   \frac{\partial}{\partial y} + R \frac{\partial}{\partial z} 
\end{equation}
is called the vector field associated with the system \eqref{eq: I1}. The degree of the polynomial vector field in \eqref{eq: I2} is defined to be $\max \{ \deg (P), \deg (Q), \deg (R) \}$. 


 The system \eqref{eq: I1} is called a polynomial Kolmogorov system in $\RR^3$ when $P=x P'$, $Q=yQ'$ and $R=zR'$ for some $P', Q', R' \in \RR[x, y, z]$. The associated vector field is called a polynomial Kolmogorov vector field.

 An {\it invariant algebraic set} for \eqref{eq: I2} is a subset $A \subset \mathbb{R}^3$ such that $A$ is the zero set of some $f(x,y,z) \in \mathbb{R}[x,y,z]$ and $\mathcal{X}f = Kf$ for some $K \in \mathbb{R}[x,y,z]$. Here, the polynomial $K$ is called the \textit{cofactor} of $f$. Moreover, $\mathcal{X}$ is also called a vector field on $A$.

The Darboux theory of integrability and its generalizations \cite{LliBol11} posits that if a system in $\RR^2$ has sufficiently many invariant algebraic sets, then it has a rational first integral. This theory can be generalized in $\RR^n$ also, see \cite{DuLlAr06}. Determining invariant algebraic curves for vector fields on surfaces has become a problem of its own; see \cite{Lli08} and the references therein. Perhaps the simplest invariant algebraic set that a vector field on a sphere can have is an invariant circle since a circle on $\mathbb{S}^2$ is always an intersection of a plane with $\mathbb{S}^2$. An interesting problem involving invariant circles is determining when an invariant circle is a limit cycle or at least when it is a periodic orbit. Note that invariant circles for polynomial vector fields on $\mathbb{S}^2$ are discussed in \cite{LliPes06} and \cite{LliZho11} for quadratic vector fields. 

On the other hand, the Kolmogorov system in $\RR^4_{+}$ has been studied in \cite{LlXi17}. Then, the integrability of a class of Kolmogorov systems in $\RR^n$ has been explored in \cite{LlRaRa20}. Importantly, Kolmogorov systems \cite{kol36} have found applications in Plasma Physics \cite{LMQ22}, Economics \cite{AFSS94} and other areas, including Ecology. In this paper, we characterize cubic polynomial vector fields and study the dynamics of cubic Kolmogorov vector fields on $\mathbb{S}^2$. We remark that the article \cite{AP96} discusses phase portraits of certain degree 3 polynomial vector fields on the Poincaré sphere.

This paper is organized in the following way. In Section \ref{sec:prelim}, we discuss the transformation of the vector fields under the stereographic projection formulae for projection from the `South pole'. Then we recall the definition of extactic polynomial, first integral and complete integrability. We show that if $\gamma$ is an orbit of $\mathcal{X}$ on $S^2$, then the cone on $\gamma$ is also invariant for $\mathcal{X}$. We recall the equation of such a cone when $\gamma$ is a circle.


In Section \ref{sec:cubic-vfld}, we give a necessary and sufficient condition when a cubic vector field in $\RR^3$ is a vector field on $\mathbb{S}^2$, see Theorem \ref{thm:cubic-vector-field}  and further also determine when the invariant great circle is a periodic orbit. We characterize cubic Kolmogorov vector fields on $\mathbb{S}^2$ in Corollary \ref{cr:kolm3}. We discuss the existence of an invariant great circle for a cubic vector field on $\mathbb{S}^2$ in Theorem \ref{thm:invariant-great-circle}. We prove that there exists a large class of completely integrable cubic vector fields on $\mathbb{S}^2$, see Theorem \ref{thm:cmplt-int-vfld}.  We prove that there exist homogeneous and non-homogeneous cubic vector fields on $\mathbb{S}^2$ having invariant great circle as a periodic orbit, see Proposition \ref{prop:hom-non_hom-periodic-orbit}.

In Section \ref{sec: 3homo_vf_on_S2}, the focus of our discussion centers around homogeneous cubic vector fields on $\mathbb{S}^2$.  In Theorem \ref{thm:inv-gcircle-one-zero}, we find tight bounds on the number of invariant great circles that a cubic homogeneous vector field on $\mathbb{S}^2$ can have. We also exhibit a condition in Proposition \ref{prop:inv_not_gcycle} when a cubic homogeneous vector field on $\mathbb{S}^2$ has an invariant circle which is not a great circle.


 In Section \ref{sec:cubic-kolm}, we study cubic Kolmogorov systems on $\mathbb{S}^2$ in detail. In particular, we draw phase portraits for various possible cubic Kolmogorov systems and show that a broad class of such systems does not admit a periodic orbit in Theorem \ref{thm:periodic_orbit_kolm}. We give a sufficient condition when a singular point in $\mathbb{S}^2 \setminus \{z=0\}$ of a cubic Kolmogorov vector field on $\mathbb{S}^2$ is either center or focus, see Theorem \ref{thm:center-focus}.



\section{Preliminaries}    \label{sec:prelim}


In this section, we discuss the transformation of the vector fields under the stereographic projection formulae for projection from the `South pole'.  Note that these have been obtained in \cite{LliZho11}.  Then we recall the concept of extactic polynomial which helps to find invariant hyper-surfaces. We recall the definition of first integral and complete integrability. We show that if $\gamma$ is an orbit of $\mathcal{X}$ on $S^2$, then the cone on $\gamma$ is also invariant for $\mathcal{X}$. We recall the equation of such a cone when $\gamma$ is a circle. Recall that the map
$$ \Phi \colon \mathbb{S}^2 - \{ (0, 0, -1)  \}  \longrightarrow \mathbb{R}^2  \quad \mbox{defined by } (x, y, z) \mapsto (u, v) $$
where,
$$ u = \frac{x}{1+z} \quad, \quad v = \frac{y}{1+z}$$ is called the stereographic projection from the South pole. 
For a vector field $\mathcal{X} = (P, Q, R)$ given by
\begin{equation*}
        P = \sum_{i+j+k \leq n} p_{ijk} x^i y^j z^k,   \quad
        Q = \sum_{i+j+k \leq n} q_{ijk} x^i y^j z^k,   \quad
        R = \sum_{i+j+k \leq n} r_{ijk} x^i y^j z^k,   
\end{equation*}
the induced map $\Phi_* (P, Q, R) = (\mathcal{P}, \mathcal{Q})$ is determined by
\begin{equation*}
    \begin{split}
        \dot{u} = \mathcal{P} = \Tilde{P}(u,v) - u \Tilde{R}(u,v),  \\
        \dot{v} = \mathcal{Q} = \Tilde{Q}(u,v) - v \Tilde{R}(u,v),
    \end{split}
\end{equation*}
where,
\begin{equation*}
    \begin{split}
        \Tilde{P}(u,v) = \sum_{i+j+k \leq n} p_{ijk} (2u)^i (2v)^j (1-u^2-v^2)^k (u^2+v^2+1)^{n-i-j-k}, \\
        \Tilde{Q}(u,v) = \sum_{i+j+k \leq n} q_{ijk} (2u)^i (2v)^j (1-u^2-v^2)^k (u^2+v^2+1)^{n-i-j-k}, \\
        \Tilde{R}(u,v) = \sum_{i+j+k \leq n} r_{ijk} (2u)^i (2v)^j (1-u^2-v^2)^k (u^2+v^2+1)^{n-i-j-k}.
    \end{split}
\end{equation*}

One of the best tools in order to search for invariant algebraic hypersurfaces is the following. Let
$W$ be a vector subspace of $\RR[x_1,...,x_n]$ generated by the independent
polynomials $v_1 ,... , v_l$, i.e., $W=\langle v_1,...,v_{\ell} \rangle$. The extactic polynomial of $\mathcal{X}$ associated with $W$ is the polynomial
$$\mathcal{E}_W(\mathcal{X})= 
\begin{vmatrix}
v_1 & \dots &v_{\ell}\\
\mathcal{X} (v_1) &\dots &\mathcal{X} (v_{\ell})\\
\vdots &\vdots &\vdots\\
\mathcal{X}^{{\ell}-1}(v_1) &\dots &\mathcal{X}^{{\ell}-1} (v_{\ell})
\end{vmatrix}
$$
where $\mathcal{X}^j (v_i) = \mathcal{X}^{j-1}( \mathcal{X} (v_i))$ for all $i, j$. From the properties of determinants, it follows that the definition of the extactic polynomial is independent of the chosen
basis of $W$.

\begin{proposition}\label{extactic-polynomial}
 \cite[Proposition 1]{LlMe07}  Let $\mathcal{X}$ be a polynomial vector field in $\mathbb{R}^n$ and $W$
a finite dimensional vector subspace of $\mathbb{R}[x_1, x_2, \ldots , x_n]$ with $\dim(W) > 1$. If $f=0$ is an invariant algebraic hypersurface for the vector field $\mathcal{X}$ with $f\in W$, then $f$ is a factor of $\mathcal{E}_W(\mathcal{X})$.
\end{proposition}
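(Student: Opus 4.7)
The plan is to use the basis independence of the extactic polynomial to reduce to a convenient choice of basis in which the invariant hypersurface $f$ is itself one of the basis vectors, and then exploit the invariance relation $\mathcal{X}(f) = Kf$ to show that $f$ divides an entire column of the defining matrix.

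First I would justify the following reduction. Since $f \in W$ and $\dim W = \ell \geq 2$, I can extend $f$ to a basis $\{f, v_2, \dots, v_\ell\}$ of $W$. If $\{v_1, \dots, v_\ell\}$ is any other basis of $W$, the two bases are related by an invertible change of coordinates matrix $A \in \mathrm{GL}_\ell(\RR)$, and the defining matrix of the extactic polynomial in the new basis is obtained from the old one by right multiplication by $A$; hence the two extactic polynomials differ by the nonzero constant $\det(A)$. So divisibility by $f$ can be checked in the preferred basis $\{f, v_2, \dots, v_\ell\}$.

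Next I would show by induction on $j \geq 0$ that there exists a polynomial $K_j \in \RR[x_1, \dots, x_n]$ with $\mathcal{X}^j(f) = K_j f$. The base case $j=0$ is trivial with $K_0 = 1$, and $j=1$ is the invariance hypothesis with $K_1 = K$. For the inductive step, applying $\mathcal{X}$ as a derivation gives
\begin{equation*}
\mathcal{X}^{j+1}(f) \;=\; \mathcal{X}\bigl(K_j f\bigr) \;=\; \mathcal{X}(K_j)\, f + K_j\, \mathcal{X}(f) \;=\; \bigl(\mathcal{X}(K_j) + K_j K\bigr) f,
\end{equation*}
so one may take $K_{j+1} = \mathcal{X}(K_j) + K_j K$, which is a polynomial. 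In particular, every entry of the first column of the matrix
\begin{equation*}
\begin{pmatrix}
f & v_2 & \cdots & v_\ell \\
\mathcal{X}(f) & \mathcal{X}(v_2) & \cdots & \mathcal{X}(v_\ell) \\
\vdots & \vdots & & \vdots \\
\mathcal{X}^{\ell-1}(f) & \mathcal{X}^{\ell-1}(v_2) & \cdots & \mathcal{X}^{\ell-1}(v_\ell)
\end{pmatrix}
\end{equation*}
is a polynomial multiple of $f$.

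Finally, expanding the determinant along this first column produces $\mathcal{E}_W(\mathcal{X})$ as a sum in which every term carries a factor of $f$, so $f$ divides $\mathcal{E}_W(\mathcal{X})$ in $\RR[x_1, \dots, x_n]$. There is no serious obstacle here; the only point requiring a little care is confirming that the extactic polynomial is genuinely basis-independent up to a nonzero scalar, which is what permits the reduction to the distinguished basis containing $f$. Everything else is an immediate consequence of $\mathcal{X}$ being a derivation and of the Leibniz-style induction above.
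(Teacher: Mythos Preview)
The paper does not supply its own proof of this proposition; it simply cites it from \cite{LlMe07}. Your argument is correct and is essentially the standard proof: reduce to a basis containing $f$, use the derivation property of $\mathcal{X}$ to show inductively that $f$ divides every $\mathcal{X}^j(f)$, and expand the determinant along that column.
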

\par The multiplicity of an invariant algebraic hypersurface $f = 0$ with $f\in W$, is the largest positive integer
$k$ such that $f^k$ divides the extactic polynomial $\mathcal{E}_W(\mathcal{X})$ when $\mathcal{E}_W(\mathcal{X})\neq 0$, otherwise the multiplicity
is infinite. For more details on this multiplicity, see \cite{ChLlPe07, LlZh09}.
\begin{definition}
Let $U$ be an open subset of $\RR^3$. A non-constant analytic map $H \colon U \to \mathbb{R}$ is called a first integral
of the vector field \eqref{eq: I2} on $U$ if $H$ is constant on all solution curves of the system \eqref{eq: I1} contained in $U$ ; i.e., $H(x(t),y(t), z(t)) =$ constant for all values of $t$ for which the solution $(x(t),y(t), z(t))$ is defined and contained in $U$.
\end{definition}
Note that $H$ is
a first integral of the vector field \eqref{eq: I2} on $U$ if and only if $\mathcal{X} H=0$ on $U$.

\begin{definition}
    The system \eqref{eq: I1} is called completely integrable in an open set $U$ if it has 2 independent first integrals.
\end{definition}
If the system \eqref{eq: I1} is completely integrable with 2 independent first integrals $H_1$ and $H_2$ then the orbits of the system are contained in $\{H_1=c_1\}\cap \{H_2=c_2\}$ for some $c_1,c_2\in \RR$. In \cite{Ll2021}, the complete integrability of vector fields in $\RR^n$ is discussed.

Recall that the intersection of a plane $ax+by+cz+d=0$ with $\mathbb{S}^2$ is a circle. We can choose $a,b,c,d$ such that $a^2 + b^2 + c^2 = 1$ and $|d| < 1.$ Any circle on $\mathbb{S}^2$ can be obtained in this way. If the plane passes through the origin, the intersection is called a great circle. Note that a vector field on $\mathbb{S}^2$ is invariant by $SO(3)$\footnote{$SO(3)$ is the group of all rotations about the origin in $\RR^3$.}. Hence, if a polynomial vector field on $\mathbb{S}^2$ has an invariant circle $\{ax+by+cz+d=0\}\cap \mathbb{S}^2$, then we can assume the circle to be $\{z+d=0\}\cap \mathbb{S}^2$ with $|d| <1$.

We now state a result for homogeneous polynomial vector fields on $\mathbb{S}^2$, which is proved for degree two in \cite{LliPes06}, and an entirely similar proof can be given for homogeneous vector fields of any degree.

\begin{proposition} \label{prop: 33}
Let $\gamma = \{ \phi(t) | t \in \mathbb{R} \} \subset \mathbb{S}^2$ be an orbit of $\mathcal{X}.$ If $\mathcal{X}$ is a homogeneous polynomial vector field on $\mathbb{S}^2,$ then $\mathcal{X}$ is tangent to the surface
$S(\gamma) = \{sp ~|~ s \in \mathbb{R}, p \in \gamma       \}.$

\end{proposition}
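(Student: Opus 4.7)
The plan is to show tangency at a typical smooth point of the cone $S(\gamma)$ by writing down an explicit parametrization, computing its tangent plane, and then using the degree-$n$ homogeneity of $\mathcal{X}$ to recognize $\mathcal{X}(sp)$ as a scalar multiple of one of the two basis tangent vectors.

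First I would parametrize the orbit. Choose $\phi\colon I \to \mathbb{S}^2$ with $\dot\phi(t)=\mathcal{X}(\phi(t))$ and $\gamma = \phi(I)$. Then the map
\[
\Psi\colon \mathbb{R}\times I \longrightarrow \mathbb{R}^3, \qquad \Psi(s,t) = s\,\phi(t),
\]
has image $S(\gamma)$. At a point $sp = \Psi(s,t)$ with $s\neq 0$ and $\mathcal{X}(p)\neq 0$, the two partial derivatives
\[
\partial_s \Psi(s,t) = \phi(t) = p, \qquad \partial_t \Psi(s,t) = s\,\dot\phi(t) = s\,\mathcal{X}(p)
\]
are linearly independent (the first is radial and the second is tangent to $\mathbb{S}^2$ at $p$), so they span the tangent plane $T_{sp}S(\gamma)$.

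Next I would invoke homogeneity: since $\mathcal{X}$ is homogeneous of degree $n$, each component of $\mathcal{X}$ satisfies $P(sp)=s^n P(p)$, and similarly for $Q,R$, so
\[
\mathcal{X}(sp) = s^{n}\,\mathcal{X}(p) = s^{n-1}\bigl(s\,\mathcal{X}(p)\bigr) = s^{n-1}\,\partial_t \Psi(s,t).
\]
Thus $\mathcal{X}(sp)\in T_{sp}S(\gamma)$, proving tangency at every regular point of $S(\gamma)$.

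Finally I would handle the remaining cases. If $s=0$, then $\mathcal{X}(sp)=\mathcal{X}(0)=0$ (as $n\geq 1$), which is trivially tangent. If $\mathcal{X}(p)=0$ for some $p\in \gamma$, then $p$ is a singular point and $\gamma=\{p\}$, so $S(\gamma)$ is the line $\mathbb{R}p$; again $\mathcal{X}(sp)=s^n\mathcal{X}(p)=0$ lies along the line. There is no serious obstacle here: the whole content of the proposition is that the factor $s^n$ produced by homogeneity is a scalar, so $\mathcal{X}$ at $sp$ points in the same direction as $\mathcal{X}$ at $p$, which is already a tangent direction to the ruling curve $t\mapsto s\phi(t)$.
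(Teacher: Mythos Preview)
Your proof is correct. The paper does not actually supply its own proof of this proposition: it states the result, cites \cite{LliPes06} for the quadratic case, and asserts that ``an entirely similar proof can be given for homogeneous vector fields of any degree.'' Your argument is precisely that similar proof carried out in full, and it is the natural one: parametrize the cone by $\Psi(s,t)=s\phi(t)$, observe that homogeneity forces $\mathcal{X}(sp)=s^{n}\mathcal{X}(p)$, and recognize this as a scalar multiple of the tangent vector $\partial_t\Psi=s\,\mathcal{X}(p)$. The treatment of the degenerate cases ($s=0$ or $\mathcal{X}(p)=0$) is also fine. One minor remark: the linear independence of $p$ and $\mathcal{X}(p)$ that you invoke can be justified more sharply by noting that for a homogeneous vector field on $\mathbb{S}^2$ the cofactor vanishes, so $Px+Qy+Rz=0$ identically and hence $p\cdot\mathcal{X}(p)=0$; thus the two vectors are orthogonal, not merely independent.
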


We want to look at the circles on $\mathbb{S}^2$ which are invariant with respect to the flow of the vector field $\mathcal{X}$ on $S^2$. In this case, Proposition \ref{prop: 33} will imply that the entire cone on the circle is also invariant with respect to the flow of $\mathcal{X}$ if the circle is invariant by the flow of $\mathcal{X}.$
In \cite{LliPes06}, the authors describe the equation of the cone of such a circle as 
\begin{equation}    \label{eq: 34}
 (a^2 - d^2)x^2 + (b^2 - d^2)y^2 + (c^2 - d^2)z^2 + 2abxy + 2acxz + 2bcyz=0  
\end{equation}
if $d \neq 0.$ If $d = 0,$ then the cone itself becomes the plane given by $ax + by + cz = 0.$
When $d=0,$ the intersection of $\{ ax + by + cz = 0    \}$ with $\mathbb{S}^2$ is a great circle.


\section{Cubic vector fields on $\mathbb{S}^2$}        \label{sec:cubic-vfld}
In this section, we characterize cubic vector fields on the standard sphere in $\RR^3$. Then we classify cubic Kolmogorov vector fields on $S^2$. We also study those cubic vector fields which have an invariant great circle.
\begin{lemma}\cite[Lemma 4.1]{JaSa23}\label{lem:32}
    Let $n\in \NN$ and $Q_1,Q_2,Q_3$ are polynomials in $\RR[x,y,z]$ such that the polynomial $Q_1x^n+Q_2y^n+Q_3z^n$ is zero. Then $Q_1=Ay^n+Bz^n$, ${Q_2=-Ax^n+Cz^n}$ and $Q_3=-Bx^n-Cy^n$ for some polynomials $A,B,C$.
\end{lemma}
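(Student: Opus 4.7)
The plan is to solve the syzygy equation $Q_1 x^n + Q_2 y^n + Q_3 z^n = 0$ by exploiting the pairwise coprimality of $x^n, y^n, z^n$ in the unique factorization domain $\RR[x,y,z]$. The key idea is to first isolate the part of the syzygy that comes from the obvious relation $y^n\cdot x^n - x^n \cdot y^n = 0$ by dividing out $z^n$.

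First, I would rewrite the given equation as $Q_1 x^n + Q_2 y^n = -Q_3 z^n$, which shows that $z^n$ divides $Q_1 x^n + Q_2 y^n$. Viewing $Q_1, Q_2$ as polynomials in $z$ with coefficients in $\RR[x,y]$, perform polynomial division by $z^n$, writing
\[
Q_1 = z^n B + R_1, \qquad Q_2 = z^n C + R_2,
\]
with $\deg_z R_1, \deg_z R_2 < n$. Substituting back, the left-hand side becomes
\[
z^n \bigl(B x^n + C y^n\bigr) + \bigl(R_1 x^n + R_2 y^n\bigr),
\]
and since the right-hand side $-Q_3 z^n$ is divisible by $z^n$, the polynomial $R_1 x^n + R_2 y^n$ must also be divisible by $z^n$. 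But this polynomial has $z$-degree strictly less than $n$, so it is identically zero.

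Second, from $R_1 x^n = -R_2 y^n$ and the coprimality of $x^n$ and $y^n$ in $\RR[x,y,z]$, I would conclude $y^n \mid R_1$ and $x^n \mid R_2$. Writing $R_1 = y^n A$ with $A \in \RR[x,y,z]$ then forces $R_2 = -x^n A$. Hence
\[
Q_1 = A y^n + B z^n, \qquad Q_2 = -A x^n + C z^n,
\]
which is exactly the claimed form. Finally, substituting these back into the original equation cancels the $A$-terms and yields $(Bx^n + Cy^n + Q_3)\, z^n = 0$, so $Q_3 = -B x^n - C y^n$.

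There is no serious obstacle here: the whole argument reduces to one division by $z^n$ and one invocation of UFD coprimality, both entirely routine. If anything, the only thing worth being careful about is keeping track of which variable the division is performed in, but by the symmetry of the statement any of $x^n, y^n, z^n$ could play the role singled out in the first step.
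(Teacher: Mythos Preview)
Your argument is correct: the division-with-remainder by $z^n$ followed by the coprimality step in the UFD $\RR[x,y,z]$ cleanly produces the claimed decomposition, and the back-substitution for $Q_3$ is accurate. Note, however, that the paper does not supply its own proof of this lemma; it simply cites \cite[Lemma~4.1]{JaSa23}, so there is nothing in the present paper to compare your approach against.
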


\begin{theorem}\label{thm:cubic-vector-field}
    Let $\mathcal{X}=(P,Q,R)$ be a cubic polynomial vector field in $\RR^3$. Then $\mathcal{X}$ is cubic vector field on $\mathbb{S}^2$ if and only if there exist $f,g,h,A,B,C\in \RR[x,y,z]$ such that
    \begin{equation}\label{eq:cubic-form}
        \begin{split}
            P&=(1-x^2-y^2-z^2)f+Ay+Bz,\\
            Q&=(1-x^2-y^2-z^2)g-Ax+Cz,\quad \text{and}\\
            R&=(1-x^2-y^2-z^2)h-Bx-Cy
        \end{split}
    \end{equation}
where $f,g,h$ are linear polynomials and $A,B,C$ are quadratic polynomials without any constant term. Moreover, this vector field has cofactor $-2(f x+g y+hz)$ for $\mathbb{S}^2$.
\end{theorem}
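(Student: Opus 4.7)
The strategy is to prove both directions. Sufficiency reduces to a direct substitution, while necessity uses invariance to produce a syzygy, to which Lemma \ref{lem:32} is applied after a careful choice of $f, g, h$.

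For sufficiency, I would substitute the expressions (\ref{eq:cubic-form}) into $\mathcal{X}(x^2+y^2+z^2-1) = 2(xP + yQ + zR)$. The cross terms $2xAy - 2yAx$, $2xBz - 2zBx$, and $2yCz - 2zCy$ cancel pairwise, leaving $2(fx + gy + hz)(1 - x^2 - y^2 - z^2) = -2(fx + gy + hz)(x^2 + y^2 + z^2 - 1)$. This establishes both the invariance of $\mathbb{S}^2$ and the cofactor formula.

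For necessity, assume $\mathcal{X}(x^2+y^2+z^2-1) = K(x^2+y^2+z^2-1)$ for some polynomial $K$; since $\mathcal{X}$ is cubic, a degree count forces $\deg K \le 2$. Writing $P = P_0 + P_1 + P_2 + P_3$ as a sum of homogeneous parts (and similarly for $Q, R$, and $K = K_0 + K_1 + K_2$), matching the constant, linear, and quadratic parts of the identity $2(xP + yQ + zR) = K(x^2+y^2+z^2-1)$ yields $K_0 = 0$, $K_1 = -2(xP_0 + yQ_0 + zR_0)$, and $K_2 = -2(xP_1 + yQ_1 + zR_1)$. I would then set
\[
f := P_0 + P_1, \qquad g := Q_0 + Q_1, \qquad h := R_0 + R_1,
\]
so that $K = -2(fx + gy + hz)$, and define $P^* := P - (1 - r^2)f$, $Q^* := Q - (1 - r^2)g$, $R^* := R - (1 - r^2)h$ with $r^2 = x^2 + y^2 + z^2$. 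The invariance identity then collapses to $xP^* + yQ^* + zR^* = 0$. Applying Lemma \ref{lem:32} with $n = 1$ supplies polynomials $A, B, C$ with $P^* = Ay + Bz$, $Q^* = -Ax + Cz$, and $R^* = -Bx - Cy$.

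It remains to verify the degree and constant-term constraints on $A, B, C$. The choice of $f, g, h$ makes the degree-0 and degree-1 parts of $P^*, Q^*, R^*$ vanish; extracting the degree-1 part of $P^* = Ay + Bz$ forces $A_0 y + B_0 z = 0$, so $A_0 = B_0 = 0$, and similarly $C_0 = 0$. Since $P^*, Q^*, R^*$ have degree $\le 3$, comparing the degree-4 parts of the three identities pins down the degree-3 parts of $A, B, C$ as a kernel element of the syzygy map (a multiple of $(z, -y, x)$), which can be subtracted off to reduce $A, B, C$ to degree $\le 2$. The cofactor claim then follows from the sufficiency computation applied to the decomposition just produced. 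The main obstacle is exactly this final matching: one must anticipate that $f, g, h$ should absorb both the constant and linear parts of $P, Q, R$ (not merely the constant parts), so that Lemma \ref{lem:32} delivers $A, B, C$ in the required shape rather than only as unstructured polynomials.
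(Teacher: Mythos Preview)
Your proof is correct and follows essentially the same strategy as the paper: both directions reduce to Lemma~\ref{lem:32} after isolating the homogeneous components of the invariance identity. The only organizational difference is that the paper applies Lemma~\ref{lem:32} twice---once to the degree-$3$ syzygy $P^{(3)}x+Q^{(3)}y+R^{(3)}z = -(x^2+y^2+z^2)(P^{(1)}x+Q^{(1)}y+R^{(1)}z)$ and once to the analogous degree-$2$ syzygy---so that the resulting $A,B,C$ automatically have degree $\le 2$ and no constant term, whereas you apply the lemma once to the global relation $xP^*+yQ^*+zR^*=0$ and then need your kernel/normalization step to trim the spurious degree-$3$ part $\alpha(z,-y,x)$; both routes are valid and equivalent in content.
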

\begin{proof}
 We write $P=P^{(3)}+P^{(2)}+P^{(1)}+P^{(0)}$ where $P^{(j)}$ is the degree $j$ homogeneous part of $P$. Similarly, we write $Q =Q^{(3)}+Q^{(2)}+Q^{(1)}+P^{(0)}$ and $R = R^{(3)}+R^{(2)}+R^{(1)}+R^{(0)}$ also in this fashion.

    Suppose $\mathcal{X}=(P, Q, R)$ is a vector field on $\mathbb{S}^2$. Then, it must satisfy
    \begin{equation}\label{cubic}
    2(Px+Qy+Rz)=K(x^2+y^2+z^2-1).
        \end{equation} for some $K\in \RR[x,y,z]$ with $\deg(K)\leq 2$.
Assume that $K^{(j)}$ is the $j$ degree homogeneous part of $K$. Then $K^{(0)}$ is $0$ since there is no constant term on the left side of \eqref{cubic}. Now, Comparing the degree 4, degree 3, degree 2 and degree 1 terms in \eqref{cubic}, we get 
    \begin{equation*}
        \begin{split}
            2(P^{(3)}x+Q^{(3)}y+R^{(3)}z)&=K^{(2)}(x^2+y^2+z^2),\\
           2(P^{(2)}x+Q^{(2)}y+R^{(2)}z)&=K^{(1)}(x^2+y^2+z^2),\\
            2(P^{(1)}x+Q^{(1)}y+R^{(1)}z)&=-K^{(2)},\\
            2(P^{(0)}x+Q^{(0)}y+R^{(0)}z)&=-K^{(1)},
        \end{split}
    \end{equation*}
    respectively.
    Hence $$P^{(3)}x+Q^{(3)}y+R^{(3)}z=-(P^{(1)}x+Q^{(1)}y+R^{(1)}z)(x^2+y^2+z^2), \quad \text{and}$$  $$P^{(2)}x+Q^{(2)}y+R^{(2)}z=-(P^{(0)}x+Q^{(0)}y+R^{(0)}z)(x^2+y^2+z^2).$$ By Lemma \ref{lem:32},
    \begin{equation*}
        \begin{split}
     P^{(3)}&=-P^{(1)}(x^2+y^2+z^2)+A_1y+B_1z,\\     Q^{(3)}&=-Q^{(1)}(x^2+y^2+z^2)-A_1x+C_1z,\\
    R^{(3)}&=-R^{(1)}(x^2+y^2+z^2)-B_1x-C_1y
        \end{split}
    \end{equation*}
where each of $A_1,B_1,C_1$ is either 0 or a quadratic homogeneous polynomial in $\RR[x, y, z]$, and
        \begin{equation*}
        \begin{split}
    P^{(2)}&=-P^{(0)}(x^2+y^2+z^2)+A_2y+B_2z\\
    Q^{(2)}&=-Q^{(0)}(x^2+y^2+z^2)-A_2x+C_2z\\  R^{(2)}&=-R^{(0)}(x^2+y^2+z^2)-B_2x-C_2y.
   \end{split}
    \end{equation*}
where each of $A_2, B_2, C_2$ is either 0 or a linear homogeneous polynomial in $\RR[x, y, z]$.
Hence
        \begin{equation*}
P =P^{(3)}+P^{(2)}+P^{(1)}+P^{(0)} =(1-x^2-y^2-z^2)(P^{(1)}+P^{(0)})+(A_1+A_2)y+(B_1+B_2)z.
    \end{equation*}
Denoting $f := P^{(1)}+P^{(0)}$, $A:=A_1+A_2$ and $B:=B_1+B_2$; $P$ becomes $(1-x^2-y^2-z^2)f+Ay+Bz$. Similarly, we get $Q$ and $R$ as in the form in \eqref{eq:cubic-form}.

Note that if $P,Q,R$ are given by \eqref{eq:cubic-form}, then they satisfy \eqref{cubic} with ${K=-2(fx+gy+hz)}$. Hence, $\mathcal{X}=(P,Q,R)$ is a vector field on $\mathbb{S}^2$. Thus, the converse part is true.
\end{proof}

\begin{corollary}      \label{cr:kolm3}
    Suppose $\mathcal{X}=(P,Q,R)$ is the cubic Kolmogorov vector field in $\RR^3$. Then $\mathcal{X}$ is a vector field on $\mathbb{S}^2$ if and only if there exist $\alpha,\beta,\gamma,a,b,c\in \RR$ such that
    \begin{equation}\label{cubic-kolm-form}
        \begin{split}
            P&=x(\alpha(1-x^2-y^2-z^2)+ay^2+bz^2),\\
            Q&=y(\beta(1-x^2-y^2-z^2)-ax^2+cz^2), \quad \text{and}\\
            R&=z(\gamma(1-x^2-y^2-z^2)-bx^2-cy^2).
        \end{split}
    \end{equation}
\end{corollary}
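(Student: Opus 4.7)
The plan is to derive the corollary by combining Theorem \ref{thm:cubic-vector-field} with the Kolmogorov condition. By Theorem \ref{thm:cubic-vector-field}, we already know $\mathcal{X}=(P,Q,R)$ is a cubic vector field on $\mathbb{S}^2$ if and only if $P,Q,R$ admit the expressions \eqref{eq:cubic-form} for some linear $f,g,h$ and quadratic $A,B,C$ with no constant term. So the task reduces to imposing the Kolmogorov restrictions $x \mid P$, $y \mid Q$, $z \mid R$ on that form and reading off the surviving parameters.

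First I would translate $x \mid P$ as the identity $P(0,y,z)=0$, namely
\[
(1-y^2-z^2)\,f(0,y,z) + y\,A(0,y,z) + z\,B(0,y,z) \equiv 0,
\]
and similarly write the identities coming from $y \mid Q$ and $z \mid R$. Expanding each of these in monomials and comparing coefficients produces a list of linear relations among the coefficients of $f,g,h,A,B,C$. From the $P$-identity one reads off that every monomial of $f(0,y,z)$ vanishes (forcing $f$ to be a multiple of $x$, say $f=\alpha x$), and that several coefficients of $A$ and $B$ vanish or are linked by $A_z+B_y=0$, $A_{yz}+B_{yy}=0$, $A_{zz}+B_{yz}=0$. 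The $Q$- and $R$-identities are handled in the same way, yielding $g=\beta y$, $h=\gamma z$, additional vanishings, and the symmetric linking relations $C_x=A_z$, $C_{xx}=A_{xz}$, $C_{xy}=A_{yz}$, $C_{xz}=A_{zz}$, and $B_y+C_x=0$, $B_{xy}+C_{xx}=0$, etc.

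Next I would check that all three sets of linking relations are mutually consistent (the three equations involving the pair $(B_y,C_x)$ collapse to a single constraint, and analogously for the other pairs), so no further conditions are produced. Substituting the surviving coefficients back into \eqref{eq:cubic-form} and collecting, the cross-terms cancel in threes courtesy of these relations, leaving only
\[
P=x\bigl[\alpha(1-x^2-y^2-z^2)+A_{xy}\,y^2+B_{xz}\,z^2\bigr],
\]
and similarly cleaned expressions for $Q$ and $R$. Setting $a:=A_{xy}$, $b:=B_{xz}$, $c:=C_{yz}$ gives \eqref{cubic-kolm-form}.

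Finally, for the converse, I would verify directly that the vector field defined by \eqref{cubic-kolm-form} has the required form \eqref{eq:cubic-form} (taking $f=\alpha x$, $g=\beta y$, $h=\gamma z$, $A=ax y-bxz\cdot 0+\cdots$, i.e.\ choosing $A,B,C$ so that the quadratic pieces reconstruct the $ay^2$, $bz^2$, $-ax^2$, $cz^2$, $-bx^2$, $-cy^2$ correctly while respecting the linking relations), so that Theorem \ref{thm:cubic-vector-field} guarantees it is a vector field on $\mathbb{S}^2$; the Kolmogorov shape is manifest. The main obstacle is purely bookkeeping: tracking the nine or so monomial coefficients in each of $A,B,C$ cleanly enough to see that exactly the three scalars $a,b,c$ survive after the three Kolmogorov divisibility conditions are imposed.
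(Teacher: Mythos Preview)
Your outline is correct and would yield a complete proof; the difference from the paper lies in how you handle the step after you have deduced $f=\alpha x$, $g=\beta y$, $h=\gamma z$. At that point the Kolmogorov conditions reduce to $x\mid Ay+Bz$, $y\mid -Ax+Cz$, $z\mid -Bx-Cy$. You treat these by expanding $A,B,C$ in monomials, writing down the linking relations among their coefficients, checking consistency, and then verifying that upon reassembling $Ay+Bz$, $-Ax+Cz$, $-Bx-Cy$ all the auxiliary parameters (your $A_z=-B_y=C_x$, $A_{xz}=C_{xx}=-B_{xy}$, etc.) cancel in pairs, leaving only $a=A_{xy}$, $b=B_{xz}$, $c=C_{yz}$. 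That is valid but requires carrying a dozen or so coefficients through the computation.

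The paper avoids the bookkeeping by reusing Lemma~\ref{lem:32}. Having written $P_1x=Ay+Bz$, $Q_1y=-Ax+Cz$, $R_1z=-Bx-Cy$, one observes the identity
\[
P_1x^2+Q_1y^2+R_1z^2 \;=\; x(Ay+Bz)+y(-Ax+Cz)+z(-Bx-Cy)\;=\;0,
\]
which holds for \emph{any} $A,B,C$. Lemma~\ref{lem:32} with $n=2$ then gives $P_1=ay^2+bz^2$, $Q_1=-ax^2+cz^2$, $R_1=-bx^2-cy^2$ directly, and degree considerations force $a,b,c\in\RR$. Your route is self-contained and makes the cancellations explicit; the paper's route is shorter and explains structurally why exactly three scalars survive, by recycling the same lemma that powers Theorem~\ref{thm:cubic-vector-field}. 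Both handle the converse the same way.
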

\begin{proof}
Suppose $\mathcal{X}=(P, Q, R)$ is a vector field on $\mathbb{S}^2$. Then by Theorem \ref{thm:cubic-vector-field}, $P,Q,R$ are given by \eqref{eq:cubic-form}. Since $\mathcal{X}$ is Kolmogorov, $x,y,z$ divide $P, Q, R$, respectively. Suppose $$P'x=P=(1-x^2-y^2-z^2)f+Ay+Bz.$$ Then, we get $P'(0,0,0)x=f$ by comparing the linear part on both sides. Assume that ${P'(0,0,0)=\alpha}$, we get $f=\alpha x$. Similarly, for $Q'y=Q$ and $R'z=R$, one gets $g=\beta y$ and $h=\gamma z$ for some $\beta, \gamma \in \RR$. Therefore, $x$ divides $Ay+Bz$, $y$ divides $-Ax+Cz$, and $z$ divides $-Bx-Cy$. Suppose $$P_1x=Ay+Bz,\quad Q_1y=-Ax+Cz \quad \text{and}\quad R_1z=-Bx-Cy$$ for some polynomials $P_1, Q_1,$ and $R_1$ with $\deg(P_1), \deg(Q_1), \deg(R_1) \leq 2$. Then, we get that $P_1x^2+Q_1y^2+R_1z^2=x(Ay+Bz)+y(-Ax+Cz)+z(-Bx-Cy)=0$. So, by Lemma \ref{lem:32}, $P_1=ay^2+bz^2,  Q_1=-ax^2+cz^2$ and $R_1=-bx^2-cy^2$ for some $a, b, c \in \RR$. Thus, we get $P,Q,R$ as in \eqref{cubic-kolm-form}.

Suppose $P,Q,R$ are given by \eqref{cubic-kolm-form}. Then, the associated differential system of the vector field ${\mathcal{X}=(P,Q,R)}$ is a cubic Kolmogorov vector field. Also, by Theorem \ref{thm:cubic-vector-field}, $\mathcal{X}$ is a vector field on $\mathbb{S}^2$. Thus, the converse part is true.
\end{proof}

\begin{theorem}\label{thm:cmplt-int-vfld}
There exist completely integrable cubic vector fields on $\mathbb{S}^2$.
\end{theorem}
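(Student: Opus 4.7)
My plan is to exhibit an explicit parametric family of completely integrable cubic vector fields on $\mathbb{S}^2$, exploiting the rotational symmetry about the $z$-axis. The guiding observation is that if a vector field rotates every latitude circle $\{z=c\}\cap \mathbb{S}^2$, then it automatically preserves both the height function $z$ and the squared cylindrical radius $x^2+y^2$, and these two functions are functionally independent on an open subset of $\RR^3$.

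Concretely, for any $\phi \in \RR[x,y,z]$ of degree exactly $2$ with no constant term (the simplest case being $\phi = z^2$), I would take
\[
\mathcal{X} = -y\phi\,\frac{\partial}{\partial x} + x\phi\,\frac{\partial}{\partial y}.
\]
The first step is to confirm that $\mathcal{X}$ is a cubic vector field on $\mathbb{S}^2$. The components $P=-y\phi$, $Q=x\phi$, $R=0$ each have degree at most $3$, and at least one is exactly cubic by the choice of $\phi$. Invariance of $\mathbb{S}^2$ is immediate from $2(Px+Qy+Rz) = -2xy\phi + 2xy\phi = 0$, which matches the form of Theorem \ref{thm:cubic-vector-field} with cofactor $K=0$; explicitly $f=g=h=0$, $A=-\phi$, and $B=C=0$ in \eqref{eq:cubic-form}.

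The second and final step is to produce two independent first integrals. A direct computation gives $\mathcal{X}(z) = R = 0$ and $\mathcal{X}(x^2+y^2) = 2xP + 2yQ = -2xy\phi + 2xy\phi = 0$, so $H_1 := z$ and $H_2 := x^2+y^2$ are first integrals of $\mathcal{X}$ on all of $\RR^3$. Their gradients $(0,0,1)$ and $(2x,2y,0)$ are linearly independent on the open set $U := \RR^3 \setminus \{x=y=0\}$, so $H_1$ and $H_2$ are functionally independent there; hence $\mathcal{X}$ is completely integrable on $U$ in the sense of the paper.

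Because the theorem is a pure existence statement, there is no serious obstacle; the only point requiring care is ensuring that $\deg(\mathcal{X}) = 3$, which is why $\phi$ must carry a nonzero degree-$2$ monomial. Varying $\phi$ over the space of such polynomials produces a multi-parameter family of completely integrable cubic vector fields on $\mathbb{S}^2$, showing that such examples are in fact abundant.
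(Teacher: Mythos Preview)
Your proof is correct. Both your argument and the paper's work by exhibiting an explicit family of cubic vector fields on $\mathbb{S}^2$ together with two independent first integrals, but the specific constructions differ. The paper builds vector fields with (in general) nonzero $f,g,h$ satisfying $fx+gy+hz=0$ and $af+bg+ch=0$, together with $A,B,C$ proportional to a single quadratic $C$; its two first integrals are the defining polynomial $x^2+y^2+z^2-1$ (cofactor zero) and a linear form $ax+by+cz$ with $a\neq 0$. Your construction instead takes $f=g=h=0$, $B=C=0$, and $A=-\phi$ arbitrary of degree two, yielding the axially symmetric field $(-y\phi,\,x\phi,\,0)$ with first integrals $z$ and $x^2+y^2$. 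Your route is shorter and more transparent---it is simply a $z$-axis rotation scaled by $\phi$---while the paper's parametrisation covers planes $ax+by+cz=0$ in a general direction and includes examples with nontrivial $f,g,h$. Since the theorem is a bare existence statement, either family suffices; note also that your pair $(z,\,x^2+y^2)$ is functionally equivalent to the pair $(z,\,x^2+y^2+z^2-1)$, so in spirit both proofs rest on the same mechanism: a linear first integral together with the sphere itself.
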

\begin{proof}



 By Lemma \ref{lem:32}, $f'x+g'y+h'z=0$ for some $f', g', h' \in \RR[x,y,z]$ if and only if $f'=\alpha y+\beta z$, $g'=-\alpha x+\gamma z$ and ${h'=-\beta x-\gamma y}$ for some $\alpha, \beta, \gamma\in \RR$. Suppose $\gamma \neq 0$ and $a \in \RR \setminus \{0\}$. Then there exist 
 $b, c \in \RR$ such that $\alpha=\frac{c\gamma}{a}$ and $\beta=-\frac{b\gamma}{a}$. Define $f:=\frac{c\gamma}{a}y-\frac{b\gamma}{a}z$, $g:=-\frac{c\gamma}{a} x+\gamma z$ and $h:=\frac{b\gamma}{a}x-\gamma y$. Then $$af+bg+ch=0 \quad \mbox{and} \quad fx+gy+hz=0.$$
Now, we choose $A=\frac{c}{a}C$ and $B=-\frac{b}{a}C$ for any quadratic polynomial $C$ with $C(0,0,0)=0$. 
Consider the vector field $\mathcal{X} = (P,Q,R)$ given by 
    \begin{equation*}
        \begin{split}
            P&=(1-x^2-y^2-z^2)f+Ay+Bz,\\
            Q&=(1-x^2-y^2-z^2)g-Ax+Cz,\quad \text{and}\\
            R&=(1-x^2-y^2-z^2)h-Bx-Cy.
        \end{split}
    \end{equation*}
By Theorem \ref{thm:cubic-vector-field}, $\mathcal{X}$ is a vector field on $\mathbb{S}^2$ with cofactor $-2(fx+gy+hz)$. Hence, $x^2+y^2+z^2-1$ is a first integral of $\mathcal{X}$. Again,
\begin{equation*}
    \begin{split}
        \mathcal{X}(ax+by+cz)=&aP+bQ+cR\\
                    =&(1-x^2-y^2-z^2)(af+bg+ch)+a(Ay+Bz)+b(-Ax+Cz)+c(-Bx-Cy)\\
                    =&(-bA-cB)x+(aA-cC)y+(aB+bC)z=0.
    \end{split}
\end{equation*}
Hence, $\mathcal{X}$ has two independent first integrals $x^2+y^2+z^2-1$ and $ax+by+cz$, which makes $\mathcal{X}$ completely integrable in $\RR^3$.
\end{proof}
\begin{corollary}
 If one of $a, b, c$ is non-zero, then there exists a cubic vector field on $\mathbb{S}^2$ which has a  first integral $ax+by+cz$.
\end{corollary}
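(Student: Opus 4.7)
The plan is to observe that the corollary is nearly contained in Theorem \ref{thm:cmplt-int-vfld} itself: the construction in the proof of that theorem already exhibits, whenever $a \neq 0$, a cubic vector field on $\mathbb{S}^2$ for which $ax+by+cz$ is a first integral (it was produced there as the second member of a pair of independent first integrals). So the content of the corollary lies in removing the auxiliary assumption $a \neq 0$ and replacing it by the weaker hypothesis $(a, b, c) \neq 0$.

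The route I would take is a symmetry reduction. The characterization of cubic vector fields on $\mathbb{S}^2$ provided by Theorem \ref{thm:cubic-vector-field} is manifestly invariant under permutations of the three coordinates, and under any such permutation the linear form $ax+by+cz$ becomes another linear form whose coefficients are merely a permutation of $(a, b, c)$. Hence if $a=0$ but $b\neq 0$ (respectively, $a=b=0$ and $c\neq 0$), I would swap $x \leftrightarrow y$ (respectively, $x \leftrightarrow z$) to reduce to the case of nonzero first coefficient, apply the already-established construction in the new coordinates, and then undo the swap to obtain the desired vector field in the original coordinates.

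Concretely, once we are in the case $a \neq 0$, I would fix $\gamma \neq 0$, set $\alpha = c\gamma/a$ and $\beta = -b\gamma/a$, and define $f = \alpha y + \beta z$, $g = -\alpha x + \gamma z$, $h = -\beta x - \gamma y$, together with $A = (c/a)C$ and $B = -(b/a)C$ for any quadratic polynomial $C$ with $C(0,0,0)=0$. Theorem \ref{thm:cubic-vector-field} then certifies that the associated $\mathcal{X}=(P,Q,R)$ is a cubic vector field on $\mathbb{S}^2$, and the short computation displayed at the end of the proof of Theorem \ref{thm:cmplt-int-vfld} shows $\mathcal{X}(ax+by+cz)=0$, so $ax+by+cz$ is a first integral. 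The only step not already executed in Theorem \ref{thm:cmplt-int-vfld} is the symmetry reduction, which is transparent and presents no real obstacle; the whole proof is therefore just a one-line appeal to that theorem plus a coordinate permutation.
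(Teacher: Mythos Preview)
Your proposal is correct and matches what the paper intends. The paper states the corollary without a separate proof, leaving it as an immediate consequence of Theorem~\ref{thm:cmplt-int-vfld}; your write-up simply makes explicit the two ingredients implicit there---the construction in that proof (which indeed assumed $a\neq 0$) and the evident coordinate-permutation symmetry of both $\mathbb{S}^2$ and the characterization in Theorem~\ref{thm:cubic-vector-field} to cover the remaining cases.
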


The rest of this section characterizes vector fields on $\mathbb{S}^2$ which have invariant great circles. Suppose $P,Q,R$ are given by \eqref{eq:cubic-form}, where
\begin{equation}\label{eq:p,q,r}
    A=\sum\limits_{1\leq i+j+k \leq 2}a_{ijk}x^iy^jz^k, ~~
    B=\sum\limits_{1\leq i+j+k \leq 2}b_{ijk}x^iy^jz^k, ~ \mbox{and} ~
    C=\sum\limits_{1\leq i+j+k \leq 2}c_{ijk}x^iy^jz^k.   
\end{equation}
Then, under the stereographic projection, the system given by \eqref{eq:cubic-form} becomes
\begin{equation*}
    \begin{split}
        \dot u&=\Tilde P(u,v)-u\Tilde R(u,v)=\mathcal{P}(u,v),\\
        \dot v&=\Tilde Q(u,v)-v\Tilde R(u,v)=\mathcal{Q}(u,v),\text{ where }
\end{split}
\end{equation*}
\begin{equation*}
    \begin{split}
        \Tilde{P}=&((u^2+v^2+1)^2-(2u)^2-(2v)^2-(1-u^2-v^2)^2)\Tilde f+\Tilde A (2v)+\Tilde B(1-u^2-v^2)\\
        =&2\Tilde Av+\Tilde B(1-u^2-v^2).
    \end{split}
\end{equation*}

Similarly, $\Tilde{Q}=-2\Tilde{A}u+\Tilde{C}(1-u^2-v^2)$ and $\Tilde{R}=-2\Tilde{B}u-2\Tilde{C}v$ where
\begin{equation*}
    \begin{split}
        \Tilde{A}=\sum\limits_{1\leq i+j+k\leq 2} a_{ijk}(2u)^i(2v)^j(1-u^2-v^2)^k(u^2+v^2+1)^{2-i-j-k},\\
        \Tilde{B}=\sum\limits_{1\leq i+j+k\leq 2} b_{ijk}(2u)^i(2v)^j(1-u^2-v^2)^k(u^2+v^2+1)^{2-i-j-k},\\
        \Tilde{C}=\sum\limits_{1\leq i+j+k\leq 2} c_{ijk}(2u)^i(2v)^j(1-u^2-v^2)^k(u^2+v^2+1)^{2-i-j-k}.
    \end{split}
\end{equation*}

\begin{theorem}\label{thm:invariant-great-circle}
    Let $\mathcal{X}=(P,Q,R)$ be a cubic vector field on $\mathbb{S}^2$. Assume $\mathcal{X}$ has an invariant great  circle. Without loss of generality, we can assume that it is $\mathbb{S}^1=\{z=0\}\cap \mathbb{S}^2$. Then, the following hold.
    \begin{enumerate}[(a)]
  \item The vector field $\mathcal{X}$ can be written as \eqref{eq:cubic-form} with
    $$B=b_{020}y^2+b_{110}xy+b_{010}y+B^{'}z, C=-b_{110}x^2-b_{020}xy-b_{010}x+C^{'}z$$ where $B^{'},C^{'}$ are linear polynomials in $\RR[x, y, z]$.
    \item The great circle $\mathbb{S}^1$ is a periodic orbit of $\mathcal{X}$ if and only if the hypersurfaces \\ $\left\lbrace \sum\limits_{1\leq i+j\leq 2}a_{ij0}u^iv^j=0   \right\rbrace$ and $\{u^2+v^2=1\}$ do not intersect in $\RR^2$.
    \end{enumerate}
\end{theorem}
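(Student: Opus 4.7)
To prove (a), the plan is to translate the invariance of $\mathbb{S}^1 = \{z=0\} \cap \mathbb{S}^2$ into a polynomial identity and then apply Lemma \ref{lem:32}. First I would observe that $\mathbb{S}^1$ is invariant iff $\mathcal{X}(z) = R$ vanishes on $\{z=0,\; x^2+y^2=1\}$. Substituting $z=0$ into the expression $R = (1-x^2-y^2-z^2)h - Bx - Cy$ from \eqref{eq:cubic-form} yields
\[
R|_{z=0} = (1-x^2-y^2)\,h(x,y,0) \;-\; B(x,y,0)\,x \;-\; C(x,y,0)\,y,
\]
so invariance forces $B(x,y,0)\,x + C(x,y,0)\,y$ to lie in the ideal generated by $x^2+y^2-1$ in $\mathbb{R}[x,y]$. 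Because $B$ and $C$ are quadratic and have no constant terms, the left-hand side is a polynomial of degree $\le 3$ all of whose terms of total degree $\le 1$ vanish; writing it as $(x^2+y^2-1)\,L(x,y)$ and matching terms of degree $\le 1$ forces the (at most linear) polynomial $L$ to be zero. Thus $B(x,y,0)\,x + C(x,y,0)\,y = 0$ identically, and Lemma \ref{lem:32} (with $n=1$ and a vanishing third polynomial, which also forces its two $z$-companions in the conclusion to vanish) yields a polynomial $A(x,y)$ of degree $\le 1$ with $B(x,y,0) = A(x,y)\,y$ and $C(x,y,0) = -A(x,y)\,x$. Writing $A(x,y) = b_{010} + b_{110}x + b_{020}y$ and collecting the remaining terms of $B$ and $C$ (those divisible by $z$) into $B'z$ and $C'z$ with $B', C'$ linear produces exactly the stated decomposition.

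For (b), since $\mathbb{S}^1$ is a compact, connected, one-dimensional invariant submanifold, standard flow theory on the circle tells us that $\mathbb{S}^1$ is a periodic orbit iff it contains no singular point of $\mathcal{X}$. Using part (a), on the locus $\{z=0,\; x^2+y^2=1\}$ a direct substitution into \eqref{eq:cubic-form} gives $R \equiv 0$ (reconfirming invariance) together with
\[
P|_{\mathbb{S}^1} = A(x,y,0)\,y, \qquad Q|_{\mathbb{S}^1} = -A(x,y,0)\,x.
\]
Since $(x,y) \neq (0,0)$ on $\mathbb{S}^1$, simultaneous vanishing of $P$ and $Q$ is equivalent to $A(x,y,0) = 0$. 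The stereographic projection $\Phi$ from the south pole sends $\mathbb{S}^1$ diffeomorphically onto the unit circle $\{u^2+v^2 = 1\}$ via $(x,y,0) \mapsto (x,y)$, and transports the polynomial $A(x,y,0) = \sum_{1 \le i+j \le 2} a_{ij0}\,x^i y^j$ to $\sum_{1 \le i+j \le 2} a_{ij0}\,u^i v^j$. Consequently the absence of singular points on $\mathbb{S}^1$ is equivalent to $\{u^2+v^2=1\}$ and $\bigl\{\sum_{1 \le i+j \le 2} a_{ij0}\,u^i v^j = 0\bigr\}$ being disjoint in $\mathbb{R}^2$.

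I expect the only genuine subtlety to lie in the cofactor-vanishing step of part (a): one has to exploit that $B$ and $C$ carry no constant terms to rule out a nonzero linear cofactor $L$, for without this the argument would only produce an identity modulo $x^2+y^2-1$, which is too weak to feed into Lemma \ref{lem:32}. Once the identity $B(x,y,0)x + C(x,y,0)y = 0$ is in hand everything is mechanical, and part (b) then reduces to a substitution combined with the standard fact that a fixed-point-free smooth flow on a circle is periodic.
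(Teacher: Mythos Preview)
Your argument is correct, and it follows a genuinely different route from the paper. For part (a) the paper pushes everything through the stereographic projection: it shows that invariance of $\{z=0\}\cap\mathbb{S}^2$ is equivalent to $u^2+v^2-1$ dividing $\tilde R$, then computes $\tilde R|_{u^2+v^2=1}=-8\sum(b_{ij0}u^{i+1}v^j+c_{ij0}u^iv^{j+1})$ and reads off the coefficient conditions directly. You instead stay on $\mathbb{S}^2$, observe that $R|_{z=0}$ must vanish on $x^2+y^2=1$, and use the absence of constant terms in $B,C$ to upgrade ``vanishing modulo $x^2+y^2-1$'' to the honest identity $B(x,y,0)x+C(x,y,0)y=0$, after which Lemma~\ref{lem:32} (or the underlying UFD argument in $\mathbb{R}[x,y]$) finishes. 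Your route is shorter and makes transparent exactly where the ``no constant term'' hypothesis on $B,C$ is used; the paper's route, by contrast, keeps the whole discussion inside the planar $(u,v)$ system that is reused later in the article.

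For part (b) the two arguments are essentially the same computation done in different coordinates: the paper evaluates $\mathcal{P},\mathcal{Q}$ on $u^2+v^2=1$ and finds them proportional to $\tilde A|_{u^2+v^2=1}=4\sum a_{ij0}u^iv^j$, whereas you evaluate $P,Q$ on $\{z=0,\;x^2+y^2=1\}$ and find them proportional to $A(x,y,0)$, then note that $\Phi$ restricts to the identity $(x,y,0)\mapsto(x,y)$ on the equator. Either way one lands on the same intersection condition.
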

\begin{proof}
(a)
If $\mathcal{X}=(P,Q,R)$ given by \eqref{eq:cubic-form} with $A,B,C$ as in \eqref{eq:p,q,r} is a vector field on $\mathbb{S}^2$ then
$$xP(x,y,z)+yQ(x,y,z)+zR(x,y,z)=0 \text{ for all $(x,y,z)\in \mathbb{S}^2$}.$$ Hence, after the stereographic projection, we get
$$2u\Tilde{P}(u,v)+2v\Tilde{Q}(u,v)+(1-u^2-v^2)\Tilde{R}(u,v)=0 \text{ for all $(u,v)\in \RR^2$}.$$ So, by the above,
\begin{equation}\label{eq:uu.+vv.}
    u\dot{u}+v\dot{v}=u\Tilde{P}(u,v)+v\Tilde{Q}(u,v)-(u^2+v^2)\Tilde{R}(u,v)=-\frac{1}{2}(u^2+v^2+1)\Tilde{R}(u,v).
\end{equation}
Under the stereographic projection $\{z=0\}\cap \mathbb{S}^2$ reduces to unit circle $u^2+v^2=1$ in $\RR^2$. Due to \eqref{eq:uu.+vv.}, one can check that $u^2+v^2-1=0$ is invariant of the vector field $(\mathcal{P},\mathcal{Q})$ if and only if $u^2+v^2-1$ divides $\Tilde{R}=-2(\Tilde{B}u+\Tilde{C}v)$. Again, $u^2+v^2-1$ divides $\Tilde{R}$ if and only if $\Tilde R|_{u^2+v^2=1}=0$. Note that
\begin{equation*}
    \begin{split}
        \Tilde R|_{u^2+v^2=1}=&-2(u \Tilde{B}|_{u^2+v^2=1}+v\Tilde{C}|_{u^2+v^2=1})\\
                             =&-8\left(\sum\limits_{1\leq i+j\leq 2} b_{ij0}u^{i+1}v^j+c_{ij0}u^iv^{j+1} \right).
    \end{split}
\end{equation*}
Hence, $\Tilde R|_{u^2+v^2=1}=0$ implies that $$b_{100}=b_{200}=c_{010}=c_{020}=0, c_{100}=-b_{010}, c_{110}=-b_{020}, c_{200}=-b_{110}.$$ Hence, the first claim follows.

(b) It is well known that the invariant great circle $\mathbb{S}^1 \subset \mathbb{S}^2$ is a periodic orbit of $\mathcal{X}$ on $\mathbb{S}^2$ if and only if there is no singular point on it. Observe that, if $\mathbb{S}^1$ is invariant then $\Tilde R|_{u^2+v^2=1}=0$. Hence, $$\mathcal{P}|_{u^2+v^2=1}=2v\Tilde{A}|_{u^2+v^2=1} \text{ and } \mathcal{Q}|_{u^2+v^2=1}=-2u\Tilde{A}|_{u^2+v^2=1}.$$ Note that $\Tilde{A}|_{u^2+v^2=1}=4\sum\limits_{1\leq i+j\leq 2}a_{ij0}u^iv^j$. So, $\mathbb{S}^1$ is a periodic orbit if and only if the following two equations do not have a common solution.
$$ \sum\limits_{1\leq i+j\leq 2}a_{ij0}u^iv^j=0,~ u^2+v^2=1.$$

\end{proof}
Llibre \cite[Theorem 2]{LliZho11} showed that a quadratic homogeneous vector field on $\mathbb{S}^2$ has no great circle which is a periodic orbit. This statement does not hold true for cubic homogeneous vector fields on $\mathbb{S}^2$.
\begin{proposition}\label{prop:hom-non_hom-periodic-orbit}
\begin{enumerate}[(a)]
        \item There exists a homogeneous vector field having a great circle as a periodic orbit.
    \item There exists a non-homogeneous vector field having a great circle as a periodic orbit.
\end{enumerate}
\end{proposition}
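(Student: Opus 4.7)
The plan is to produce explicit examples for both parts, using Theorem \ref{thm:invariant-great-circle} as the sole tool. Without loss of generality, I take the great circle to be $\mathbb{S}^1 = \{z=0\}\cap \mathbb{S}^2$. By Theorem \ref{thm:invariant-great-circle}, the task reduces to exhibiting a cubic vector field on $\mathbb{S}^2$ whose associated polynomials $A, B, C$ in \eqref{eq:cubic-form} fit the shape prescribed in part (a) of that theorem, and for which the polynomial $\sum_{1\le i+j\le 2} a_{ij0}\, u^i v^j$ has no zero on the unit circle $u^2+v^2=1$.

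For (a), I will force homogeneity of degree $3$ in \eqref{eq:cubic-form} by setting $f=g=h=0$ and choosing $A, B, C$ to be homogeneous of degree $2$. The simplest choice satisfying the invariance shape of Theorem \ref{thm:invariant-great-circle}(a) is $A = x^2+y^2$ and $B=C=0$ (so all the $b$-coefficients in $B, C$ are zero). This produces the vector field
\[
\mathcal{X} \;=\; \bigl((x^2+y^2)y,\; -(x^2+y^2)x,\; 0\bigr),
\]
which is homogeneous of degree $3$. On $\mathbb{S}^1 = \{z=0\}\cap \mathbb{S}^2$ one has $x^2+y^2=1$, so $\mathcal{X}$ restricts to the rotation field $(y,-x,0)$ with no singular point. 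Equivalently, $\sum a_{ij0}u^iv^j = u^2+v^2$ takes the value $1$ on $u^2+v^2=1$. Hence Theorem \ref{thm:invariant-great-circle}(b) yields that $\mathbb{S}^1$ is a periodic orbit, proving (a).

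For (b), I will perturb the example of (a) by a term that vanishes on $\mathbb{S}^2$ but breaks homogeneity in $\RR^3$. Specifically, keeping $A = x^2+y^2$, $B=C=0$, I replace $f=g=h=0$ by $f = x,\; g = y,\; h = 0$. The resulting
\[
P = (1-x^2-y^2-z^2)x + (x^2+y^2)y, \quad Q = (1-x^2-y^2-z^2)y - (x^2+y^2)x, \quad R = 0
\]
defines a cubic vector field on $\mathbb{S}^2$ by Theorem \ref{thm:cubic-vector-field}, and it is non-homogeneous because $P$ and $Q$ contain both linear and cubic monomials. Since $B$ and $C$ are unchanged, the shape of Theorem \ref{thm:invariant-great-circle}(a) still holds, and on $\mathbb{S}^1$ the factor $1-x^2-y^2-z^2$ vanishes, so $\mathcal{X}|_{\mathbb{S}^1} = (y,-x,0)$ once again, giving a periodic orbit.

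There is no substantive obstacle here; the argument is a direct construction. The only items to verify are that each proposed field fits the invariance form of Theorem \ref{thm:invariant-great-circle}(a) (immediate from the vanishing of the relevant coefficients) and that $A(x,y,0)=x^2+y^2\equiv 1$ on the unit circle, which trivially ensures the non-intersection condition of Theorem \ref{thm:invariant-great-circle}(b).
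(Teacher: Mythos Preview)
Your proposal is correct and follows essentially the same approach as the paper: construct an explicit example with $A = x^2 + y^2$ so that $\sum a_{ij0}u^iv^j = u^2+v^2$ is nonvanishing on the unit circle, invoke Theorem \ref{thm:invariant-great-circle}(a)--(b), and for part (b) activate nonzero $f,g,h$ to break homogeneity. The paper happens to take $B = y^2+xy$, $C = -x^2-xy$ (still in the admissible shape of Theorem \ref{thm:invariant-great-circle}(a)) rather than your simpler $B=C=0$, but this is cosmetic; your choice is cleaner and the argument is identical.
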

\begin{proof}
(a) Consider the vector field $\mathcal{X}$ on $\mathbb{S}^2$ given by \eqref{eq:cubic-form} with $$A=x^2+y^2, B=y^2+xy, C=-x^2-xy\quad \text{and}\quad f=g=h=0 .$$ By (a) of Theorem \ref{thm:invariant-great-circle}, the intersection $\{z=0\}\cap \mathbb{S}^2$ is an invariant great circle for $\mathcal{X}$. Also, by (b) of Theorem \ref{thm:invariant-great-circle}, the great circle is periodic since $u^2+v^2=0$ and $u^2+v^2=1$ do not have a  common solution.

(b) Consider the vector field $\mathcal{X}$ on $\mathbb{S}^2$ given by \eqref{eq:cubic-form} with $$A=x^2+y^2, \quad B=y^2+xy, \quad C=-x^2-xy$$ and at least one of $f,g,h$ is non-zero. By (a) of Theorem \ref{thm:invariant-great-circle}, $\{z=0\}\cap \mathbb{S}^2$ is an invariant great circle for $\mathcal{X}$. Also, by (b) of Theorem \ref{thm:invariant-great-circle}, the great circle is periodic since $u^2+v^2=0$ and $u^2+v^2=1$ do not have a  common solution.
\end{proof}

\section{Cubic homogeneous vector fields on $\mathbb{S}^2$ } \label{sec: 3homo_vf_on_S2}

In this section, we study some properties of the vector fields
$$ \mathcal{X} := P \frac{\partial}{\partial x} + Q \frac{\partial}{\partial y} + R \frac{\partial}{\partial z},$$ defined on $\mathbb{S}^2 = \{(x, y,z) \in \RR ~|~  f = x^2 + y^2 + z^2 - 1=0\}$ such that $P,Q,R$ are homogeneous cubic polynomials in $\RR[x, y,z]$. Hence
\begin{equation}  \label{eq: 31}
    \mathcal{X}(f) = K_f (f)
\end{equation}
where $K_f$ is the cofactor.
We see that 
$$\mathcal{X}(f) = 2Px + 2Qy + 2Rz.$$
So, $\mathcal{X}$ is homogeneous implies that $\mathcal{X}(f)$ is also homogeneous, but we see that the right-hand side of \eqref{eq: 31} is not homogeneous unless $K_f = 0.$ Therefore for a homogeneous vector field $\mathcal{X}=(P,Q,R)$ on $\mathbb{S}^2,$ we have
$$ Px+Qy+Rz = 0.$$
Hence by Lemma \ref{lem:32}, the associated differential system of a homogeneous vector field $\mathcal{X}=(P,Q,R)$ on $\mathbb{S}^2$ will be of the following form.
\begin{equation}\label{eq:cubic-hom}
        P=Ay+Bz ,\quad
        Q=-Ax+Cz,\quad \mbox{and} \quad
        R=-Bx-Cy
\end{equation}
for some polynomials $A,B,C$ in $R[x,y,z]$. 


Next, we discuss invariant circles on $\mathbb{S}^2$ for homogeneous cubic vector fields. 
We see from Proposition \ref{prop: 33} that a circle on $\mathbb{S}^2$ is invariant with respect to the flow of $\mathcal{X}$ if and only if the cone of the circle is invariant with respect to the flow of $\mathcal{X}.$ This implies that the circle given by the intersection of $\{ ax + by + cz + d = 0    \}$ with $\mathbb{S}^2$ is invariant if and only if

\begin{equation}   \label{eq: 35}
\begin{split}
    \mathcal{X} \left( (a^2 - d^2)x^2  + (b^2 - d^2)y^2 + (c^2 - d^2)z^2 + 2abxy + 2acxz + 2bcyz \right) = \\ K \left( (a^2 - d^2)x^2  + (b^2 - d^2)y^2 + (c^2 - d^2)z^2 + 2abxy + 2acxz + 2bcyz \right)  \end{split}
\end{equation}
for some polynomial $K \in \RR[x, y, z]$ where $a^2+b^2+c^2=1$ and $|d| < 1$. 
We remark that 
The cones of the invariant circles $\{ax+by+cz+d=0\}\cap \mathbb{S}^2$ and $\{ax+by+cz-d=0\}\cap \mathbb{S}^2$ are same. Hence, invariant circles which are not great circles always occur in pairs.

In the case of a great circle $\{ ax + by + cz = 0  \} \cap \mathbb{S}^2,$ the cone is the plane $\{  ax + by + cz = 0   \}$ itself, and then the great circle is invariant with respect to the flow of $\mathcal{X}$ if and only if
\begin{equation}  \label{eq: 36}
\mathcal{X} (ax + by + cz) = K' (ax + by + cz)
\end{equation}
for some polynomial $K' \in \RR[x, y, z].$

\begin{theorem}\label{thm:inv-gcircle-one-zero}
Let $\mathcal{X}=(P,Q,R)$ be a cubic homogeneous vector field on $\mathbb{S}^2$. Assume $\mathcal{X}$ has an invariant great  circle. Without loss of generality, we can assume that it is $
\mathbb{S}^1=\{z=0\}\cap \mathbb{S}^2$. If $\mathcal{X}$ has finitely many invariant great circles, then the maximum number of invariant great circles of the form $\{ax+by+cz=0\}\cap \mathbb{S}^2$ is
\begin{enumerate}
    \item 3 if $a=0$ or $b=0$,
    \item 2 if $c=0$,
    \item 2 if $a,b,c$ are non-zero.
\end{enumerate}
Moreover, in each of the above cases, the bound can be reached.
\end{theorem}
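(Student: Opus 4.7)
My starting point is to apply Theorem~\ref{thm:invariant-great-circle}(a) in the homogeneous setting (so $f=g=h=0$ and the linear parts of $A,B,C$ all vanish): the invariance of $\{z=0\}\cap\mathbb{S}^2$ then forces
\[
P = Ay + Bz,\qquad Q = -Ax + Cz,\qquad R = -Bx - Cy,
\]
with $A$ an arbitrary quadratic form and $B(x,y,0)=yL(x,y)$, $C(x,y,0)=-xL(x,y)$ for some linear form $L$. A direct substitution then yields $R = zR''$ for an explicit quadratic form $R''$, and $\mathcal{X}|_{\{z=0\}\cap\mathbb{S}^2} = A(x,y,0)(y\partial_x - x\partial_y)$; in particular, the singular points of $\mathcal{X}$ on the invariant equator are exactly the zeros of the quadratic form $A(x,y,0)$ on $\mathbb{S}^1$, totalling at most two antipodal pairs.

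The driving geometric input is that any invariant great circle $\gamma\neq\{z=0\}$ that meets the equator at a point $p$ forces $p$ to be singular for $\mathcal{X}|_{\mathbb{S}^2}$ (two invariant algebraic curves on a smooth surface meet transversally only at singularities), and dually, at a non-singular point the tangent of any invariant algebraic curve is uniquely pinned to the flow direction. For case~(3), every such circle $\{ax+by+cz=0\}$ with $abc\neq 0$ meets the equator at the antipodal pair $\pm(b,-a,0)/\sqrt{a^2+b^2}$, which lies off the coordinate axes and must be a zero of $A(x,y,0)$ on $\mathbb{S}^1$; at each singular pair, $\{z=0\}$ already occupies one eigendirection of $D\mathcal{X}|_{T_p\mathbb{S}^2}$, leaving at most one case-(3) circle tangent to the other eigendirection, so the total is at most $2$. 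For case~(2), every circle $\{ax+by=0\}\cap\mathbb{S}^2$ contains the $z$-axis and hence passes through $(0,0,1)$, and parameterising such circles by $\nu=-b/a$ (with $\nu=\infty$ giving $\{y=0\}$) I set up the invariance condition as four polynomial equations $F_1,\dots,F_4$ in $\nu$ of degrees $2,1,2,1$; a case analysis of which $F_i$ are identically zero (combined with the one-or-infinity dichotomy at $(0,0,1)$) then gives the sharp bound $2$. For case~(1), applying the same procedure symmetrically through $(\pm1,0,0)$ and $(0,\pm1,0)$ yields three analogous equations $E_1,E_2,E_3$ (resp.\ $G_1,G_2,G_3$) in the slope $\lambda$ (resp.\ $\mu$) of degrees $1,2,1$, and the combined count is bounded by $2+2-1=3$, the subtraction accounting for the shared circle $\{z=0\}$. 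A short cross-compatibility computation with the $E_i$ and $G_i$ shows that neither subcase alone can reach the hypothetical count $3$ without forcing the other subcase to collapse to just $\{z=0\}$, which is what keeps the union bounded by $3$ rather than by the naive $2+2-1=3$ only under optimistic assumptions.

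For the tightness, the field $\mathcal{X}=(0,\,yz^2,\,-y^2z)$ is homogeneous cubic on $\mathbb{S}^2$ with invariant great circles exactly $\{x=0\},\{y=0\},\{z=0\}$, realising $3$ in case~(1) and $2$ in case~(2) (via $\{x=0\},\{y=0\}$). For case~(3), the field $\mathcal{X}=(x+y+z)(x-y+z)\cdot(\alpha y,-\alpha x,0)$ for $\alpha\neq 0$ is homogeneous cubic on $\mathbb{S}^2$ whose invariant great circles are precisely $\{z=0\},\{x+y+z=0\},\{x-y+z=0\}$, the last two being of type~(3); verifying this reduces to factoring $\mathcal{X}(ax+by+cz)=\alpha((x+z)^2-y^2)(ay-bx)$ and noting that $ax+by+cz$ must be associate to one of the three linear factors present. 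The main technical obstacle is the degenerate scalar-linearization case in which $D\mathcal{X}|_{T_p\mathbb{S}^2}$ is a multiple of the identity at a singular point $p$: the eigendirection heuristic alone no longer bounds the number of invariant tangents through $p$, and one has instead to appeal to the explicit invariance equations and to the finiteness hypothesis (which rules out identical vanishing of all of the $E_i,F_i,G_i$) to force the stated bounds; running this fallback case by case is what makes the cross-compatibility argument in case~(1) non-trivial.
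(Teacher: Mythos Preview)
Your approach is genuinely different from the paper's. The paper handles the upper bounds in cases (1) and (2) by computing the extactic polynomials $\mathcal{E}_{\langle y,z\rangle}(\mathcal{X})$ and $\mathcal{E}_{\langle x,y\rangle}(\mathcal{X})$ and counting linear factors, and handles case (3) by writing out the coefficient equations coming from $\mathcal{X}(ax+by+cz)=K'(ax+by+cz)$ and solving the resulting system (with computer assistance) for three simultaneous planes. Your plan instead routes everything through the singular locus on the equator and an eigendirection count at each singular point, supplemented by polynomial equations in a slope parameter. The geometric picture is appealing, but as written the proposal has real gaps.

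\textbf{The tightness example for case (1) is wrong under the paper's reading.} The paper treats ``$a=0$'' and ``$b=0$'' as two separate pencils, each with maximum $3$; this is explicit in its proof, which bounds factors of $\mathcal{E}_{\langle y,z\rangle}(\mathcal{X})$ and then says ``the proof for $b=0$ is similar''. For your field $\mathcal{X}=(0,\,yz^2,\,-y^2z)$, the invariant great circles with $a=0$ are exactly $\{y=0\}$ and $\{z=0\}$: indeed $\mathcal{X}(by+cz)=yz(bz-cy)$, and $by+cz$ divides this only when $b=0$ or $c=0$. So you realise $2$, not $3$, in the $a=0$ pencil. The paper's example with $A'=(b_1y+c_1z)(b_2y+c_2z)+B'y$, $C'=0$ genuinely produces three invariant planes $z=0,\ b_1y+c_1z=0,\ b_2y+c_2z=0$ all with $a=0$. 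Your ``$2+2-1=3$'' bookkeeping seems to rest on a different reading of the statement (the union of the two pencils), and in any event the intermediate claim that each pencil contributes at most $2$ beyond $\{z=0\}$ is refuted by the paper's example.

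\textbf{The upper-bound arguments are heuristic and the hard case is not done.} The eigendirection principle you invoke (``$\{z=0\}$ occupies one eigendirection, leaving at most one other invariant circle through $p$'') is not a theorem: through a node or a degenerate singularity there can be several analytic invariant curves, and the number of invariant \emph{algebraic} great circles through a singular point is not controlled by the linearisation alone. You acknowledge this (``the main technical obstacle is the degenerate scalar-linearisation case'') but then defer to ``the explicit invariance equations'' without carrying out the analysis. Likewise, the assertions that the invariance conditions in cases (1) and (2) reduce to equations in a slope parameter of degrees $1,2,1$ and $2,1,2,1$ are stated without derivation, and the ``case analysis of which $F_i$ are identically zero'' is exactly where the work lies. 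As it stands, the proposal is a plausible outline for cases (2) and (3) but not a proof; and for case (1) it proves the wrong statement and supplies a non-sharp example. Your case (3) tightness example, by contrast, is correct and cleaner than the paper's.
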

\begin{proof}
The vector field $\mathcal{X}$ is given by \eqref{eq:cubic-hom}. Since $z=0$ is invariant, $z$ divides $R=-Bx-Cy$. Hence, by Lemma \ref{lem:32}, $B=L y+B'z$ and $C=-Lx+C'z$ for some  $L,B',C' \in \RR[x,y,z]$ of degree less than or equal to 1. Then
\begin{equation*}
    \begin{split}
        P&=Ay+(Ly+B'z)z=(A+Lz)y+B'z^2=A'y+B'z^2,\\
        Q&=-Ax+(-Lx+C'z)z=-(A+Lz)x+C'z^2=-A'x+C'z^2,\\
        R&=-(Ly+B'z)x-(-Lx+C'z)y=-z(B'x+C'y).
    \end{split}
\end{equation*} where $A':=A+Lz$ is a homogeneous polynomial of degree less than or equal to 2.
We write
\begin{equation*}
\begin{split}
    A'&=a_1x^2+a_2y^2+a_3z^2+a_4xy+a_5xz+a_6yz,\\
    B'&=b_1x+b_2y+b_3z, \mbox{ and} \\
    C'&=c_1x+c_2y+c_3z.
\end{split}
\end{equation*}
(1) The extactic polynomial of $\mathcal{X}$ associated with $\langle y,z \rangle$ is 
$$\mathcal{E}_{\langle y,z\rangle}(\mathcal{X})=\begin{vmatrix}
        y&z\\
        Q&R
    \end{vmatrix}=z(-C'(y^2+z^2)+x(A'-B'y)).$$ Suppose $\mathcal{E}_{\langle y,z\rangle}(\mathcal{X})$ has $\ell$ factors of the form $by+cz$, say $b_iy+c_iz$ for $i=1,2,3,4$. Then $$z(-C'(y^2+z^2)+x(A'-B'y))=\prod\limits_{i=1}^4 (b_iy+c_iz).$$ For $x=0$, the above equation becomes $-zC'(0,y,z)(y^2+z^2)=\prod\limits_{i=1}^4 (b_iy+c_iz)$. But this is not possible since $y^2+z^2$ has no linear factor over $\RR$. So, $\mathcal{E}_{\langle y,z\rangle}(\mathcal{X})$ has at most 3 factors of the form $by+cz$. Hence, for $a = 0$ by the Proposition \ref{extactic-polynomial}, the maximum number of invariant great circles of the form $\{by+cz=0\}\cap \mathbb{S}^2$ is 3.

    To show that the bound can be reached, consider $A'=\prod\limits_{i=1}^2(b_iy+c_iz)+B'y$, $C'=0$ for any linear homogeneous polynomial $B'$.  Then the vector field
$$\mathcal{X} =(P,Q,R)= \left( y\prod\limits_{i=1}^2(b_iy+c_iz)+B'(y^2+z^2), -x\prod\limits_{i=1}^2(b_iy+c_iz)-B'xy,-B'xz \right)$$
  has invariant hyperplanes $z=0$, $b_1y+c_1z=0$, and $b_2y+c_2z=0$. 

  The proof for $b=0$ is similar to the proof of $a = 0$. To show that the bound can be reached, consider $A'=\prod\limits_{i=1}^2(a_ix+c_iz)-C'y$, $B'=0$ for any linear homogeneous polynomial $C'$. Then the vector field
 $$\mathcal{X}=(P,Q,R)= \left( y\prod\limits_{i=1}^2(a_ix+c_iz)-C'y^2, -x\prod\limits_{i=1}^2(a_ix+c_iz)+C'xy+C'z^2, -C'yz \right)$$
 has invariant hyperplanes $z=0$, $a_1x+c_1z=0$, and $a_2x+c_2z=0$.

(2) The extactic polynomial of $\mathcal{X}$ associated with $\langle x,y \rangle$ is
$$\mathcal{E}_{\langle x,y\rangle}(\mathcal{X})=\begin{vmatrix}
        x&y\\
        P&Q
    \end{vmatrix}=-A'(x^2+y^2)+z^2(C'x-B'y).$$ Suppose $\mathcal{E}_{\langle x,y\rangle}(\mathcal{X})$ has $4$ factors of the form $ax+by$, say $a_ix+b_iy$ for $i=1,2,..,\ell \leq 4$. Then 
    \begin{equation}\label{x-y-hyperplane}
        -A'(x^2+y^2)+z^2(C'x-B'y)=p\prod\limits_{i=1}^{\ell} (a_ix+b_iy)
    \end{equation}
 for some polynomial $p\in \RR[x,y,z]$. In particular, for $z=0$, last equation corresponds that $$-A'(x,y,0)(x^2+y^2)=p(x,y,0)\prod\limits_{i=1}^{\ell} (a_ix+b_iy).$$ If $A'(x,y,0)\neq 0$ then $\ell \leq 2$. 
 Otherwise, $A'=zA''$ and $p=zp'$ for some polynomials $A'',p'\in \RR[x,y,z]$. Then \eqref{x-y-hyperplane} becomes $$-A''(x^2+y^2)+z(C'x-B'y)=p' \prod\limits_{i=1}^{\ell} (a_ix+b_iy).$$ Again, for $z=0$, this equation gives $-A''(x,y,0)(x^2+y^2)=p'(x,y,0) \prod\limits_{i=1}^{\ell} (a_ix+b_iy)$. If $A''(x,y,0)\neq 0$ then $\ell \leq 1$. Otherwise, $A''=\alpha z$ for some $\alpha \in \RR$. In that case, we obtain $\mathcal{E}_{\langle x,y\rangle}(\mathcal{X})=z^2(-\alpha (x^2+y^2)+(C'x-B'y))$. So, in this case also $\mathcal{E}_{\langle x,y\rangle}(\mathcal{X})$ has maximum two factors of the form $ax+by$. So, combining all the cases, $\mathcal{X}$ has maximum two invariant great circles of the form $\{ax+by=0\}\cap \mathbb{S}^2$.

    To prove that the bound can be reached, consider $A'=0,B'=C'=ax+by$. Then the vector field $$\mathcal{X}=(P,Q,R)=((ax+by)z^2,(ax+by)z^2, -(ax+by)(x+y)z)$$ has invariant hyperplanes $ax+by=0$ and $x-y=0$.

 (3) In \eqref{eq: 36}, if $\mathcal{X}$ is a homogeneous vector field of degree three, then $K'$ is homogeneous of degree two. Let
$$ K' = k_1x^2 + k_2y^2 + k_3z^2 + k_4xy + k_5xz + k_6yz. $$
Then, from \eqref{eq: 36}, we get the following by equating the coefficients on its both sides of each monomial.

\begin{equation*}    \label{eq: 39}
\begin{split}
x^3:~ & ba_1 = -ak_1                   \\
y^3:~ & aa_2 = bk_2                    \\
z^3:~ & ab_3 + bc_3 = ck_3             \\
x^2y:~ & aa_1 - ba_4 = bk_1 + ak_4        \\
x^2z:~ & ba_5 +cb_1 = -ck_1 - ak_5              \\
y^2z:~ & aa_6 -cc_2 = ck_2 + bk_6              \\
xy^2:~ & aa_4 -ba_2 = ak_2 + bk_4      \\
xz^2:~ &-ba_3 + ab_1 - cb_3 + bc_1 = ak_3 + ck_5       \\
yz^2:~ & aa_3 + ab_2 + bc_2 - cc_3 = bk_3 + ck_6       \\
xyz:~ & aa_5 - ba_6 - cb_2 - cc_1 = ck_4+ bk_5+ak_6.
\end{split}
\end{equation*}
Then, from the first six equations, one can compute $k_i$'s. For example $k_1=\frac{ba_1}{-a}$, $k_2=\frac{aa_2}{b}$, $k_3=\frac{ab_3+bc_3}{c}$, and so on similarly. Substituting these $k_i$'s in the remaining four equations, we get the following four equations respectively.


\begin{equation}\label{eq: 301}
\begin{split}
E_1:~ &b^2a_1+a^2a_2-aba_4=0,\\
E_2:~ &bc^3a_1+a^2bca_3-abc^2a_5-ac(a^2+c^2)b_1+a^2(a^2+c^2)b_3-a^2bcc_1+a^3bc_3=0, \\
E_3:~ &ac^3a_2+ab^2ca_3-abc^2a_6+ab^2cb_2-ab^3b_3+bc(b^2+c^2)c_2-b^2(b^2+c^2)c_3=0,\\
E_4:~ &b^2c(a^2+2b^2)a_1-a^4ca_2-ab^3ca_4-ab^2(a^2+b^2)a_5+a^2b(a^2+b^2)a_6 -ab^3cb_1 \\ &+a^2b^2cb_2+a^2b^2cc_1-a^3bcc_2=0.
\end{split}
\end{equation}
If the vector field $\mathcal{X}$ has $\{ ax + by + cz = 0   \} \cap \mathbb{S}^2$ as an invariant great circle, then \eqref{eq: 301} must be satisfied. In order for there to be additional great circles $\{ px + qy + rz = 0     \} \cap \mathbb{S}^2,$ and $\{ sx + ty + uz = 0   \} \cap \mathbb{S}^2, $ the vector field $\mathcal{X}$ must satisfy \eqref{eq: 301} with $(p,q,r)$ and $(s,t,u)$ in place of $(a,b,c)$ in addition to \eqref{eq: 301} itself. Thus we have a system of twelve equations, and solving them (with the help of SAGEMATH\footnote{ Sagemath official website:
\href{https://www.sagemath.org/}{https://www.sagemath.org/}}), we get
\begin{equation*}
a_1=a_2=a_4=a_5=a_6=b_1=b_3=c_2=c_3=0,\text{and } a_3=-b_2=c_1.
\end{equation*}
For these coefficients, the vector field becomes $\mathcal{X}=(0,0,0)$. Hence, the maximum number of invariant great circles might be 2.

To prove that the bound can be reached, consider the following example. Suppose we have the polynomials $A'=x^2+y^2+2xy+xz+yz, B'=-y+z$, $C'=x-z$. Then 
\begin{equation*}
    \begin{split}
\mathcal{X}=&(P,Q,R)\\
                =&(A'y+B'z^2, -A'x+C'z^2, -B'xz-C'xz)\\
                =&(x^2y+y^3+2xy^2+xyz+y^2z-yz^2+z^3, -x^3-xy^2-2x^2y-x^2z-xyz+xz^2-z^3, -xz^2+yz^2)
    \end{split}
\end{equation*} is a vector field on $\mathbb{S}^2$. One can check that $x+y+z=0$ and $x+y-z=0$ are invariant hyperplanes for $\mathcal{X}$ with corresponding cofactors $-x^2+y^2$ and $-x^2+y^2-2xz+2yz$ respectively. Hence, $\{x+y+z=0\}\cap \mathbb{S}^2$ and $\{x+y-z=0\}\cap \mathbb{S}^2$ are invariant great circles of $\mathcal{X}$.
\end{proof}

Now, we shall look at invariant circles, which are not great circles.



\begin{proposition} \label{prop:inv_not_gcycle}
Assume that a cubic homogeneous vector field $\mathcal{X}=(P,Q,R)$ on $\mathbb{S}^2$ has an invariant circle which is not a great circle. Without loss of generality, assume the invariant circle to be $\{z+d=0\}\cap \mathbb{S}^2$ with $0<d<1$. Then $R=(px+qy)(-d^2(x^2+y^2+z^2)+z^2)$ for some $p,q\in \RR$.
\end{proposition}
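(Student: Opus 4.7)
The plan is to combine the cone equation for the invariant circle with the structural restriction \eqref{eq:cubic-hom} on homogeneous cubic vector fields on $\mathbb{S}^2$.

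First, I would set up the cone. Using \eqref{eq: 34} with $a=b=0$, $c=1$, the cone of the circle $\{z+d=0\} \cap \mathbb{S}^2$ is the zero set of
\[
F(x,y,z) := -d^2(x^2+y^2+z^2) + z^2.
\]
By Proposition \ref{prop: 33}, the invariance of the circle under $\mathcal{X}$ is equivalent to the invariance of the cone, so there exists a polynomial $K$ with $\mathcal{X}(F) = K\cdot F$. Since $\mathcal{X}$ is cubic homogeneous and $F$ is quadratic homogeneous, $K$ must be homogeneous of degree $2$.

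Next I would compute $\mathcal{X}(F)$ directly. We have
\[
\mathcal{X}(F) = -2d^2(xP+yQ+zR) + 2zR.
\]
Since $\mathcal{X}$ is homogeneous on $\mathbb{S}^2$, the identity $xP+yQ+zR=0$ holds (as recalled in the discussion preceding \eqref{eq:cubic-hom}), so $\mathcal{X}(F)=2zR$. Therefore
\[
2zR = K\cdot F = K\bigl(-d^2(x^2+y^2+z^2)+z^2\bigr).
\]
Setting $z=0$ yields $0 = -d^2\,K(x,y,0)(x^2+y^2)$, and since $d\neq 0$ this forces $K(x,y,0)=0$. Hence $z$ divides $K$; write $K=zL$ with $L$ linear homogeneous, say $L=\lambda x+\mu y+\nu z$. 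Dividing the previous identity by $z$ gives
\[
R = \tfrac{1}{2}L\cdot F = \tfrac{1}{2}(\lambda x+\mu y+\nu z)\bigl(-d^2(x^2+y^2+z^2)+z^2\bigr).
\]

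The final step, and the genuinely substantive one, is to kill the $\nu z$ term. By \eqref{eq:cubic-hom}, we can write $R=-Bx-Cy$, so every monomial of $R$ is divisible by $x$ or by $y$; in particular, the coefficient of $z^3$ in $R$ vanishes. The coefficient of $z^3$ in the expression above equals $\tfrac{\nu(1-d^2)}{2}$, and because $|d|<1$ we have $1-d^2\neq 0$, forcing $\nu=0$. Setting $p=\lambda/2$ and $q=\mu/2$ gives exactly the claimed form $R=(px+qy)\bigl(-d^2(x^2+y^2+z^2)+z^2\bigr)$. The only point requiring care is ensuring that the ``no $z^3$ term'' constraint from \eqref{eq:cubic-hom} is correctly extracted; everything else is a short degree count.
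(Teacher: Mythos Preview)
Your proof is correct and follows essentially the same route as the paper: both compute $\mathcal{X}(F)=2zR$ using $xP+yQ+zR=0$, deduce that $z$ divides the cofactor $K$ (you by evaluating at $z=0$, the paper by observing $z\nmid F$), and then use the structural form $R=-Bx-Cy$ from \eqref{eq:cubic-hom} to force the $z^3$ coefficient $\nu(1-d^2)/2$ to vanish. The only cosmetic difference is that the paper writes $K_{\mathcal{C}}=2z(px+qy+rz)$ directly and concludes $r(1-d^2)=0$, whereas you introduce $L=\lambda x+\mu y+\nu z$ and set $p=\lambda/2$, $q=\mu/2$ at the end.
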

\begin{proof}
    Recall that $\mathcal{X}=(P,Q,R)$ is a cubic homogeneous vector field on $\mathbb{S}^2$ if and only if $\mathcal{X}$ is given by \eqref{eq:cubic-hom}, i.e., $P=Ay+Bz, Q=-Ax+Cz,R=-Bx-Cy$ for some polynomials $A,B,C\in \RR[x,y,z]$.

    The equation of the cone corresponding to the circle $ \mathcal{C} : = \{z + d = 0 \} \cap \mathbb{S}^2 $ is 
$$ g_{ \mathcal{C}} : = -d^2(x^2+y^2+z^2)+z^2. $$
Therefore, the condition for the circle $\mathcal{C}$ to be invariant is 
\begin{equation}   \label{eq: 302}
\mathcal{X}g_{\mathcal{C}} = K_{\mathcal{C}} g_{\mathcal{C}}, \end{equation}
where the polynomial $K_{\mathcal{C}}$ is the cofactor. Note that $K_{\mathcal{C}}$ is a homogeneous polynomial. Now,
\begin{equation*}
    \begin{split}
        &\mathcal{X}g_{\mathcal{C}}=K_{\mathcal{C}}g_{\mathcal{C}}\\
       \implies& -2d^2(Px+Qy+Rz)+2Rz=K_{\mathcal{C}}g_{\mathcal{C}}\\
       \implies& 2Rz=K_{\mathcal{C}}g_{\mathcal{C}}\quad \text{since $Px+Qy+Rz=0$}.
    \end{split}
\end{equation*}
Since $d\neq 0$, $z$ does not divide $g_{\mathcal{C}}$. So, the last implication tells us that $z$ divides $K_{\mathcal{C}}$, assume that $K_{\mathcal{C}}=2z(px+qy+rz)$ for some $p,q,r\in \RR$. Hence, $R=(px+qy+rz)(-d^2(x^2+y^2+z^2)+z^2)$. Since $R=-Bx-Cy$, we get $r(1-d^2)=0$. So, either $r=0$ or $d=\pm 1$. Since $d<1$, $r$ must be zero. Then $R=(px+qy)(-d^2(x^2+y^2+z^2)+z^2)$. 
\end{proof}
\begin{remark}
    Proposition \ref{prop:inv_not_gcycle} can also be proved for degree $n (> 3)$ homogeneous vector fields on $\mathbb{S}^2$. In that case, $R=(Mx+Ny)(-d^2(x^2+y^2+z^2)+z^2)$ for some $M,N\in \RR[x,y,z]$ such that $\deg M = (n-3)$ or $\deg N = (n-3)$.
\end{remark}

\section{Cubic Kolmogorov vector fields on $\mathbb{S}^2$}\label{sec:cubic-kolm}

We recall that the form of a cubic Kolmogorov vector field on $\mathbb{S}^2$ has been examined in Corollary \ref{cr:kolm3}. 
This section aims to study the dynamical properties of these cubic Kolmogorov vector fields on $\mathbb{S}^2.$  For convenience, we shall rewrite the general form of a cubic Kolmogorov vector field, $\mathcal{X} = (P, Q, R)$ defined on $\mathbb{S}^2,$

\begin{equation}    \label{eq:kolmsys}
\begin{split}
P &= x(\alpha (1-x^2-y^2-z^2) + Ay^2 + Bz^2), \\
Q &= y(\beta (1-x^2-y^2-z^2) - Ax^2 + Cz^2),  ~\mbox{and} \\
R &= z(\gamma (1-x^2-y^2-z^2) - Bx^2 - Cy^2),
\end{split}    
\end{equation}
where $\alpha, \beta, \gamma$ and $A, B, C$ are constants. In what follows, we shall assume that $A,B,C \neq 0$ unless it is specified. Notice from \eqref{eq:kolmsys} that the hyperplanes $\{ x = 0    \}$, $\{ y = 0    \}$, and $\{ z = 0    \}$ are invariant with respect to the flow of $\mathcal{X}.$ This gives that on $\mathbb{S}^2,$ the great circles determined by the intersection of these hyperplanes with $\mathbb{S}^2,$ that is, $\{ x = 0    \} \cap \mathbb{S}^2$, $\{ y = 0    \} \cap \mathbb{S}^2$, and $\{ z = 0    \} \cap \mathbb{S}^2$ are invariant with respect to the flow of $\mathcal{X}.$
From this, it implies that the points where these great circles intersect are singular points. Hence $(\pm 1, 0, 0),$ $(0, \pm 1, 0),$ and $(0, 0, \pm 1)$ are singular points of $\mathcal{X}.$ We want to determine if $\mathcal{X}$ has any other singular points on $\mathbb{S}^2.$ Note that from \eqref{eq:kolmsys}, additional singular points can be obtained by solving the following equations. 
\begin{equation*}
    \begin{split}
        Ay^2 + Bz^2 = 0, \\
        -Ax^2 + Cz^2 = 0, \\
        ~~-Bx^2 - Cy^2 = 0.
    \end{split}
\end{equation*}
These equations, together with the fact that we are on the sphere, $x^2 + y^2 + z^2 = 1,$ give
\begin{equation}       \label{eq:extra_singularities}
 x = \pm \sqrt{ \frac{-C}{B-A-C}}, \quad   
 y = \pm \sqrt{ \frac{B}{B-A-C}}, \quad \mbox{and} \quad z = \pm \sqrt{ \frac{-A}{B-A-C}}.
\end{equation}

Observe that $(x,y,z)$ is a point in $\mathbb{R}^3$ when $A,C > 0,$ $B < 0$ or when $A,C < 0,$ $B > 0.$ If neither of these conditions are true, $\mathcal{X}$ has only $(\pm 1, 0, 0),$ $(0, \pm 1, 0),$ and $(0, 0, \pm 1)$ as singularities.

 Our objective is to determine the phase portraits of cubic Kolmogorov vector fields on $\mathbb{S}^2.$ From \eqref{eq:kolmsys}, we notice that the antipodal transformation $(x, y, z) \mapsto (-x, -y, -z)$ on $\RR^3$ causes $(P, Q, R) \mapsto (-P, -Q, -R)$. This implies that trajectories in the southern hemisphere mirror trajectories in the northern hemisphere with the direction of flow reversed. Thus, in order to determine the phase portrait of these vector fields on $\mathbb{S}^2$, it is sufficient to determine the phase portrait on one of the hemispheres (including its boundary). In this section, we shall use the stereographic projection $\Phi$ of Section \ref{sec:prelim} from the South pole,  to determine the phase portrait on the northern hemisphere. The northern hemisphere along with its boundary is mapped to the closed unit disk $D^2 := \{ (u,v) \in \mathbb{R}^2 ~ | ~ u^2 + v^2 \leq 1    \}$ by $\Phi.$ 
We find the converted vector field in $\RR^2$ of the cubic Kolmogorov vector field on $\mathbb{S}^2$ given by \eqref{eq:kolmsys}. For that, we have the following:

\begin{equation*}
\begin{split}
 \Tilde{P} &= 2u(4Av^2+B(1-u^2-v^2)^2),  \\
 \Tilde{Q} &= 2v(-4Au^2+C(1-u^2-v^2)^2),  ~\mbox{and}\\
\Tilde{R} &= -4(1-u^2-v^2)(Bu^2+Cv^2).
\end{split}
\end{equation*}
This gives

\begin{equation}      \label{eq:kolm_stereop}
    \begin{split}
      &  \mathcal{P} = \dot{u} = 2u(4Av^2+B(1-u^2-v^2)^2) - u \Tilde{R}, ~\mbox{and}\\
      &  \mathcal{Q} = \dot{v} = 2v(-4Au^2+C(1-u^2-v^2)^2)-v \Tilde{R}.
    \end{split}
\end{equation}

We shall now study the behaviour of this system near the singular points $(1, 0, 0),$ $(0, 1, 0),$ and $(0, 0, 1).$ Note that the behaviour of the system near the singular points $(-1, 0, 0),$ $(0, -1, 0),$ and $(0, 0, -1)$ is identical (with reverse arrows) to the behaviour around their antipodal counterparts by the discussion above.

 In the case(s) where these are the only singular points of $\mathcal{X},$ we shall be able to draw phase portraits. We shall compute the Jacobian, $J$ at singular points of the vector field $\Phi_*(\mathcal{X}) = (\mathcal{P}, \mathcal{Q})$ in $\mathbb{R}^2.$ We have
 
\begin{equation}    \label{eq:jacobian}
  J =  \begin{pmatrix}
\mathcal{P}_u &  \mathcal{P}_v \\ \mathcal{Q}_u &  \mathcal{Q}_v\end{pmatrix}. 
\end{equation}

From \eqref{eq:kolm_stereop}, we have
\begin{equation*}
    \begin{split}
&\mathcal{P}_u =2(4Av^2+B(1-u^2-v^2)^2)-8Bu^2(1-u^2-v^2)-\Tilde{R}-u\Tilde{R}_u,\\
&\mathcal{P}_v=2u(8Av-4Bv(1-u^2-v^2))-u\Tilde{R}_v,\\
&\mathcal{Q}_u=2v(-8Au-4Cu(1-u^2-v^2))-v\Tilde{R}_u,\\
&\mathcal{Q}_v=2(-4Au^2+C(1-u^2-v^2)^2)-8Cv^2(1-u^2-v^2)-\Tilde{R}-v\Tilde{R}_v.
    \end{split}
\end{equation*}



Also,
\begin{equation*}
    \begin{split}
&\Tilde{R}_u=8u(Bu^2+Cv^2)-8Bu(1-u^2-v^2),\\
&\Tilde{R}_v=8v(Bu^2+Cv^2)-8Cv(1-u^2-v^2).
    \end{split}
\end{equation*}


Now we compute the Jacobian for the singular points $(1, 0, 0),$ $(0, 1, 0),$ and $(0, 0, 1).$

\begin{enumerate}
    \item For $(1, 0, 0)$: The stereographic projection $\Phi$ maps this point to $(1, 0)$ on the plane. We compute
    $$\mathcal{P}_u(1, 0) = -8B, \quad \mathcal{P}_v(1, 0) = 0, \quad \mathcal{Q}_u(1, 0) = 0, \quad \mathcal{Q}_v(1, 0) = -8A.$$
    Therefore,
    \begin{equation*}
     J_{(1, 0)} =  8 \begin{pmatrix}
    -B &  0 \\ 0 &  -A\end{pmatrix}. 
    \end{equation*}

    \item For $(0, 1, 0)$: The stereographic projection $\Phi$ maps this point to $(0, 1)$ on the plane. We compute
    $$\mathcal{P}_u(0, 1) = 8A, \quad \mathcal{P}_v(0, 1) = 0, \quad \mathcal{Q}_u(0, 1) = 0, \quad \mathcal{Q}_v(0, 1) = -8C.$$
    Therefore,
    \begin{equation*}
     J_{(0, 1)} =  8 \begin{pmatrix}
    A &  0 \\ 0 &  -C\end{pmatrix}. 
    \end{equation*}

    \item For $(0, 0, 1)$: The stereographic projection $\Phi$ maps this point to $(0, 0)$ on the plane. We compute
    $$\mathcal{P}_u(0, 0) = 2B, \quad \mathcal{P}_v(0, 0) = 0, \quad \mathcal{Q}_u(0, 0) = 0, \quad \mathcal{Q}_v(0, 0) = 2C.$$
    Therefore,
    \begin{equation*}
     J_{(0, 0)} =  2 \begin{pmatrix}
    B &  0 \\ 0 &  C\end{pmatrix}. 
    \end{equation*}
\end{enumerate}

Assume that $A, B, C \neq 0.$ Let
\begin{equation}\label{eq:a_and_b}
(a)~~ A,C > 0, B < 0, \quad \mbox{and} \quad
    (b)~~ A, C < 0, B > 0. 
\end{equation}
Thus, if neither (a) nor (b) is true, then $(\pm 1, 0, 0),$ $(0, \pm 1, 0),$ and $(0, 0, \pm 1)$ are the only singular points of $\mathcal{X}$ on $\mathbb{S}^2$, and we draw the phase portraits for each of these cases in Figure \ref{fig:all}.




\begin{figure}
    \centering
    \begin{subfigure}{0.32\textwidth}
        \includegraphics[width=\linewidth]{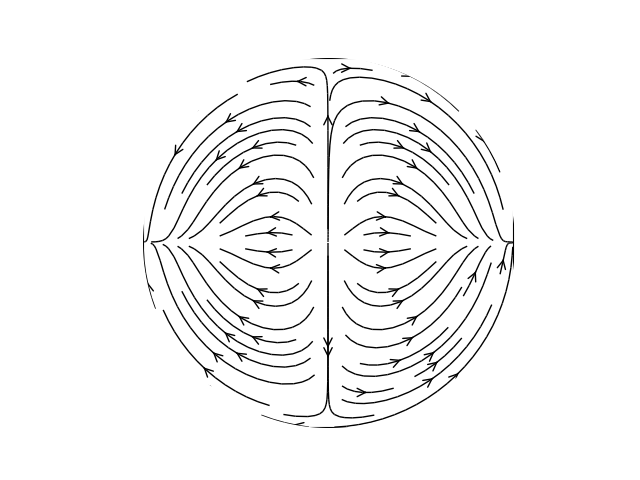}
        \caption {When $A,B,C > 0$.}
        \label{fig:fig1}
    \end{subfigure}
    \hfill
    \begin{subfigure}{0.32\textwidth}
        \includegraphics[width=\linewidth]{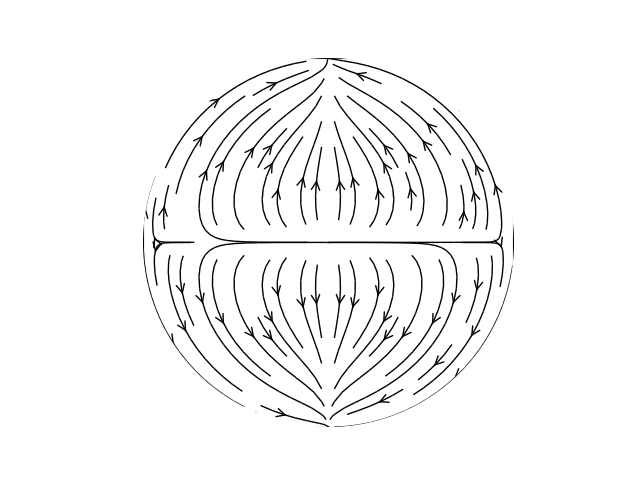}
        \caption{When $A<0,$ $B,C > 0$.}
        \label{fig:fig2}
    \end{subfigure}
    \hfill
    \begin{subfigure}{0.32\textwidth}
        \includegraphics[width=\linewidth]{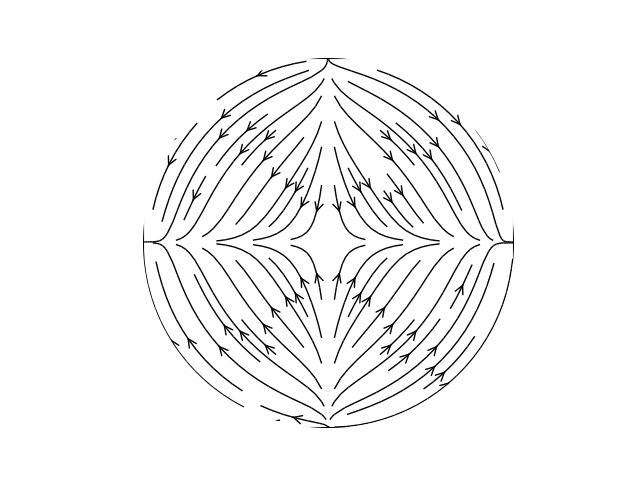}
        \caption{When $A,B>0,$ $C < 0$.}
        \label{fig:fig3}
    \end{subfigure}

       \hfill
    \begin{subfigure}{0.32\textwidth}
        \includegraphics[width=\linewidth]{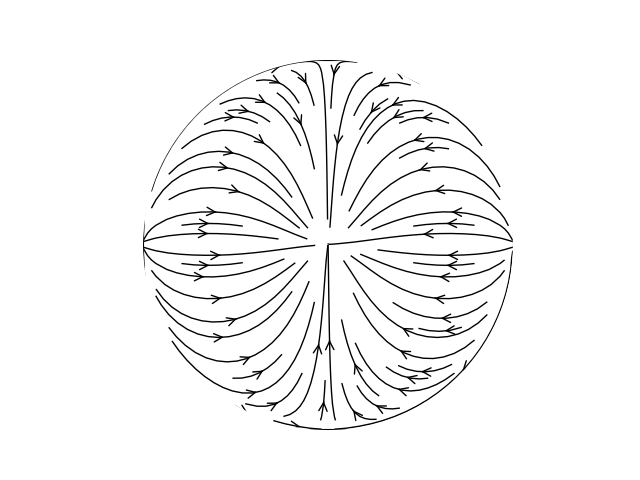}
        \caption {When $A,B,C < 0$.}
        \label{fig:fig1a}
    \end{subfigure}
    \hfill
    \begin{subfigure}{0.32\textwidth}
        \includegraphics[width=\linewidth]{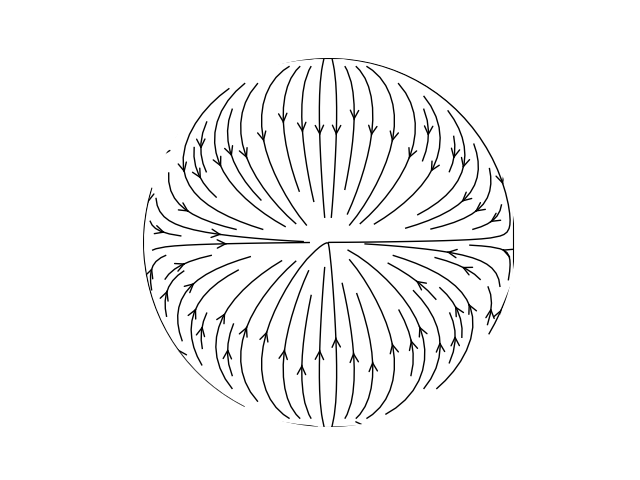}
        \caption{When $A>0,$ $B,C < 0$.}
        \label{fig:fig2a}
    \end{subfigure}
    \hfill
    \begin{subfigure}{0.32\textwidth}
        \includegraphics[width=\linewidth]{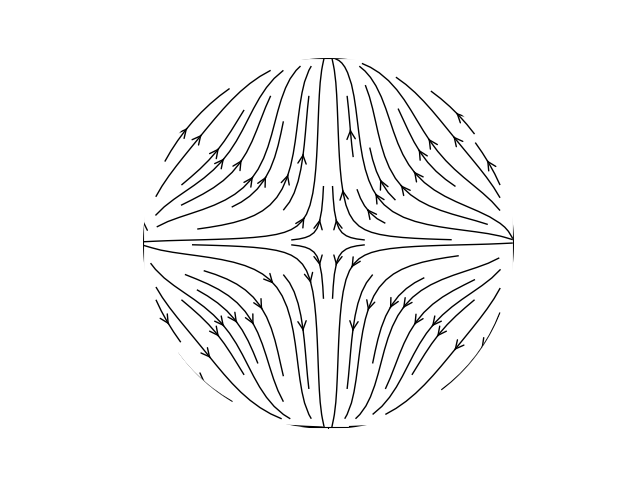}
        \caption{When $A,B<0,$ $C > 0$.}
        \label{fig:fig3a}
    \end{subfigure}
    \caption{Phase portraits when neither (a) nor (b) of \eqref{eq:a_and_b} is true.}
    \label{fig:all}
\end{figure}

\begin{remark}

 When one of $A, B, C$ is zero, then the singular points are no longer isolated. For example, we plot the phase portrait when $A = 0$ and $B, C > 0$ in  Figure \ref{fig:fig4}. In this case, all points on the boundary of the disk (i.e., the unit circle) are singular.
\end{remark}

\begin{figure}[H]
    \centering
    \includegraphics[scale=0.32]{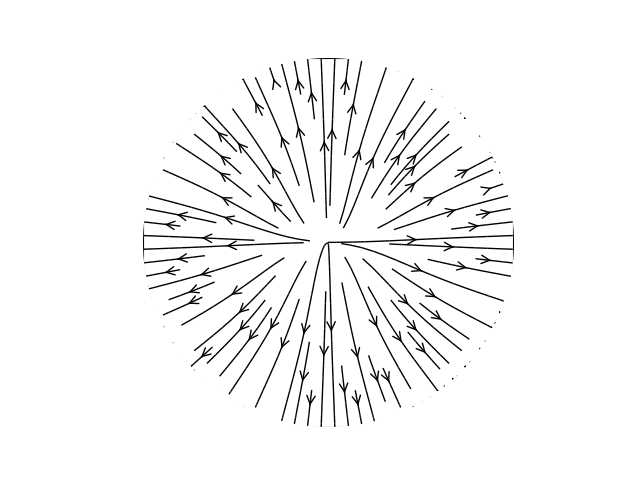}
    \caption{}
    \label{fig:fig4}
\end{figure}

From the above discussion, we see that when neither (a) nor (b) is true in \eqref{eq:a_and_b}, any Kolmogorov vector field of the form in \eqref{eq:kolmsys} has phase portrait that is topologically equivalent to either (I) or (III) in Figure \ref{fig:all}. So we have the following theorem.

\begin{theorem}   \label{thm:periodic_orbit_kolm}
    A cubic Kolmogorov vector field on $\mathbb{S}^2$ of the form given in \eqref{eq:kolmsys} admits no periodic orbit if $A,B,C$ in \eqref{eq:kolmsys} do not satisfy the following.
$$(1)~A,C > 0, B < 0,\quad \mbox{and}\quad (2)~A,C < 0, B > 0.$$
\end{theorem}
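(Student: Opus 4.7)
The plan is to exploit the three invariant coordinate planes together with a Poincaré index argument after stereographic projection. First, from \eqref{eq:kolmsys} I observe that $x$ divides $P$, $y$ divides $Q$, and $z$ divides $R$, so the great circles $\{x=0\}\cap\mathbb{S}^2$, $\{y=0\}\cap\mathbb{S}^2$, and $\{z=0\}\cap\mathbb{S}^2$ are all invariant under the flow. They partition $\mathbb{S}^2$ into eight open spherical triangles. By uniqueness of solutions, no orbit can cross any of them, so any putative periodic orbit $\gamma$ must lie entirely inside one open octant. Using the antipodal symmetry $(P,Q,R)\mapsto(-P,-Q,-R)$ under $(x,y,z)\mapsto(-x,-y,-z)$, it suffices to rule out periodic orbits in the four octants of the open northern hemisphere $\{z>0\}$.

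Second, the explicit solution \eqref{eq:extra_singularities} to the singularity equations off the coordinate planes shows that such singular points exist on $\mathbb{S}^2$ exactly when one of the sign patterns (a) or (b) of \eqref{eq:a_and_b}, i.e.\ (1) or (2) in the hypothesis, holds. Under our assumption, therefore, the only singular points of $\mathcal{X}$ on $\mathbb{S}^2$ are $(\pm 1,0,0)$, $(0,\pm 1,0)$, $(0,0,\pm 1)$, all of which lie on the three invariant great circles; hence every open octant is singularity-free.

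Third, I transport the problem to $\mathbb{R}^2$ via the stereographic projection $\Phi$ from the South pole described in Section \ref{sec:prelim}. The open northern hemisphere maps diffeomorphically onto the open unit disk, while $\{x=0\}\cap\mathbb{S}^2$ and $\{y=0\}\cap\mathbb{S}^2$ map to the $u$- and $v$-axes and $\{z=0\}\cap\mathbb{S}^2$ maps to the unit circle $u^2+v^2=1$. Each of the four open octants in $\{z>0\}$ thus corresponds to an open ``pie-slice'' region $R$ inside the disk, which is bounded and simply connected, and the transformed vector field $(\mathcal{P},\mathcal{Q})$ of \eqref{eq:kolm_stereop} has no singular point in $R$.

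Finally, $\Phi(\gamma)$ is a Jordan curve contained in $R$. Because $R$ is simply connected, the Jordan--Schoenflies theorem guarantees that the bounded component $D$ of $\mathbb{R}^2\setminus\Phi(\gamma)$ is contained in $R$. By the Poincaré index theorem applied to the analytic planar field $(\mathcal{P},\mathcal{Q})$, the sum of the indices of the singular points lying in $D$ must equal $+1$, the index of a closed orbit. But $D\subset R$ contains no singular points of $(\mathcal{P},\mathcal{Q})$, a contradiction; hence no such $\gamma$ exists. The step I expect to be the most delicate is the containment $D\subset R$: it would fail if the octant region were not simply connected, so the argument rests on packaging the invariant circles with Jordan--Schoenflies to localize the interior of the orbit inside a singularity-free simply connected region.
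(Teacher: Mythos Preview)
Your argument is correct and is in fact considerably more rigorous than what the paper does. The paper's ``proof'' consists of computing the Jacobians of $(\mathcal{P},\mathcal{Q})$ at the three singular points $(1,0)$, $(0,1)$, $(0,0)$, drawing the six resulting phase portraits in Figure~\ref{fig:all}, and then observing from the pictures that each is topologically equivalent to either (I) or (III), neither of which exhibits a closed orbit. No global argument (Dulac function, index count, etc.) is written down; the absence of periodic orbits is read off the figures.

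You instead give a self-contained topological argument: the three invariant coordinate circles cut $\mathbb{S}^2$ into eight open triangles; under the standing hypothesis the system has no equilibria in any of these triangles (this is exactly what \eqref{eq:extra_singularities} encodes when $A,B,C\neq 0$); and the Poincar\'e index theorem then forbids a periodic orbit inside a simply connected, singularity-free region. The only points one might tighten are cosmetic: (i) you should say a word about why a periodic orbit cannot lie \emph{on} one of the invariant great circles (each of them carries at least two equilibria, so its arcs are heteroclinic, not periodic); and (ii) the containment $D\subset R$ follows immediately because the complement of a convex open set in $\mathbb{R}^2$ is connected and unbounded, hence lies in the exterior of any Jordan curve contained in $R$---you do not really need Jordan--Schoenflies or Riemann mapping here. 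In fact the stereographic step is optional: each open octant is already a topological disk on $\mathbb{S}^2$, and the index theorem applies directly there.

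What your approach buys is a clean, assumption-free proof that does not rely on inspecting pictures; what the paper's approach buys is the finer information of the actual phase portraits (stable/unstable nodes and saddles at the six poles), which your argument does not produce.
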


In the remaining, we discuss what happens when condition (a) in \eqref{eq:a_and_b} is true. Note that the same analysis will also apply for (b), albeit with a change in the sense of the orbits in the phase portrait. That is when $A,C > 0$ and $B<0.$ We know that in this case, there exist extra singularities given in \eqref{eq:extra_singularities}. Applying the stereographic projection to these singular points, we have,
$$ u = \frac{\pm \sqrt{-C} }{ \sqrt{B-A-C} \pm \sqrt{-A} } \quad , \quad   v = \frac{\pm \sqrt{B} }{ \sqrt{B-A-C} \pm \sqrt{-A} }.         $$
Let us fix a singular point
$$ u_0 = \frac{ \sqrt{-C} }{ \sqrt{B-A-C} + \sqrt{-A} } \quad , \quad   v_0 = \frac{ \sqrt{B} }{ \sqrt{B-A-C} + \sqrt{-A} }.         $$
We note that the analysis below can be carried out entirely similarly for the other singular points. We need to evaluate the Jacobian in \eqref{eq:jacobian} at $(u_0, v_0).$ After the calculation, we get the following.

\begin{equation*}
    \begin{split}
\mathcal{P}_u(u_0,v_0) &= \frac{8AB}{D^2} + 2B \left( \frac{D^2 + C - B}{D^2} \right)^2,      \\
\mathcal{P}_v(u_0,v_0) &= \frac{16 A \sqrt{-BC}}{D^2} + 8 \frac{(C-B) \sqrt{-BC}}{D^2} \left( \frac{D^2 + C - B}{D^2} \right),    \\
\mathcal{Q}_u(u_0,v_0) &= \frac{-16 A \sqrt{-BC}}{D^2} - 8 \frac{(C-B) \sqrt{-BC}}{D^2} \left( \frac{D^2 + C - B}{D^2} \right),    \\
\mathcal{Q}_v(u_0,v_0) &= \frac{8AC}{D^2} + 2C \left( \frac{D^2 + C - B}{D^2} \right)^2,
\end{split}
\end{equation*}
where $D := \sqrt{B-A-C} + \sqrt{-A}.$ Notice that with our choice of the coefficients $A,B$ and $C$, the term $D$ is imaginary and so $D^2 < 0.$ We take 
$$F = \frac{8A}{D^2} + 2 \left( \frac{D^2 + C - B}{D^2} \right)^2,$$
then the characteristic polynomial $c(\lambda)$ for the Jacobian matrix  $J_{(u_0,v_0)}$ is 
\begin{equation}     \label{eq:characteristic_equation}
c(\lambda) = \lambda^2 - (B+C)F \lambda + BCF^2 + \mathcal{P}_v^2(u_0,v_0).         
\end{equation}
The roots of this polynomial are by definition the eigenvalues of the Jacobian, and from these eigenvalues, we shall determine the nature of the singular point.
The discriminant, $\Delta$, of $c(\lambda)$ is  
\begin{equation*}
    \begin{split}
\Delta := & (B + C)^2F^2 - 4BCF^2 - 4 \mathcal{P}_v^2(u_0,v_0)    \\
        = & (C - B)^2F^2 - 4 \mathcal{P}_v^2(u_0,v_0)     \\
        = & ((C-B)F + 2 \mathcal{P}_v(u_0,v_0))((C - B)F - 2 \mathcal{P}_v(u_0,v_0)).
\end{split}
\end{equation*}
Now,

\begin{equation*}
    \begin{split}
&(C-B)F + 2 \mathcal{P}_v(u_0,v_0)\\
=& \frac{8A(C - B)}{D^2} + 2 (C - B)\left( \frac{D^2 + C - B}{D^2} \right)^2 + \frac{32 A \sqrt{-BC}}{D^2} +  \frac{16(C-B) \sqrt{-BC}}{D^2} \left( \frac{D^2 + C - B}{D^2} \right)     \\
 =& \frac{8A(C - B + 4 \sqrt{-BC})  }{D^2} + \frac{2(C - B) (D^2 + C - B + 8 \sqrt{-BC})}{D^2}  \left( \frac{D^2 + C - B}{D^2} \right)      .
\end{split}
\end{equation*}
For large enough $A,$ the expression $(C - B)F + 2\mathcal{P}_v(u_0,v_0)$ is negative. In fact
\begin{equation*}
    \begin{split}
&\lim_{A \rightarrow \infty}   \{  (C - B)F + 2\mathcal{P}_v(u_0,v_0) \}  \\ 
 =& \lim_{A \rightarrow \infty} \left( \frac{8A[(\sqrt{C} + \sqrt{-B})^2 + 2 \sqrt{-BC}]}{D^2} + \frac{2(C - B)[D^2 + (\sqrt{C} + \sqrt{-B})^2 + 6 \sqrt{-BC}]}{D^2}  \left( \frac{D^2 + C - B}{D^2} \right) \right) \\
 =& \lim_{A \rightarrow \infty} \left( \frac{8A[( C - B + 4 \sqrt{-BC}]}{B - 2A - C + 2\sqrt{-A}\sqrt{B-A-C} } + \frac{2(C - B)[ -2A + 2\sqrt{-A} \sqrt{B - A - C} + 8 \sqrt{-BC}]}{B - 2A - C + 2\sqrt{-A}\sqrt{B-A-C}}  \left( \frac{-2A + 2\sqrt{-A} \sqrt{B - A - C} }{B - 2A - C + 2\sqrt{-A}\sqrt{B-A-C}} \right) \right)  \\
=& -8\sqrt{-BC}.
\end{split}
\end{equation*}
In a similar manner
\begin{equation*}
    \begin{split}
&(C-B)F - 2 \mathcal{P}_v(u_0,v_0)\\
=& \frac{8A(C - B)}{D^2} + 2 (C - B)\left( \frac{D^2 + C - B}{D^2} \right)^2 - \frac{32 A \sqrt{-BC}}{D^2} -  \frac{16(C-B) \sqrt{-BC}}{D^2} \left( \frac{D^2 + C - B}{D^2} \right)     \\
 =& \frac{8A(C - B - 4 \sqrt{-BC})} {D^2} + \frac{2(C - B) (D^2 + C - B - 8 \sqrt{-BC})}{D^2}  \left( \frac{D^2 + C - B}{D^2} \right)  .
\end{split}
\end{equation*}
For large enough $A,$ the expression $(C - B)F - 2 \mathcal{P}_v$ is positive. In fact
$$ \lim_{A \rightarrow \infty}   \{  (C - B)F - 2\mathcal{P}_v \} = 8 \sqrt{-BC}. $$
From the above, we conclude that for $A$ large enough, the discriminant $\Delta$ of $c(\lambda)$ is negative and hence the Jacobian matrix at $(u_0,v_0)$ has a pair of complex conjugate eigenvalues, say $\mu \pm i \nu$. Thus we get the following result using \cite[Theorem 4, Page-143]{Per01} and \cite[Corollary to Theorem 5, Page-145]{Per01}.
\begin{theorem}\label{thm:center-focus}
    For $A$ large enough, the singular point $(u_0, v_0)$ is either a center or a focus.
\end{theorem}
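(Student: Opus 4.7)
The plan is to bolt together the algebraic facts already derived in the paragraphs preceding the statement and then invoke the standard planar classification of isolated equilibria via the Jacobian spectrum. Essentially all the substantive computation has been done; the real content of the theorem is that the two factors of the discriminant $\Delta$ whose $A\to\infty$ limits have been computed genuinely take opposite signs, so $\Delta<0$ eventually, after which a citation to Perko finishes the job.

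First I would recall the factorisation
\begin{equation*}
\Delta = \bigl((C-B)F + 2\mathcal{P}_v(u_0,v_0)\bigr)\bigl((C-B)F - 2\mathcal{P}_v(u_0,v_0)\bigr)
\end{equation*}
of the discriminant of $c(\lambda)$, together with the two limits already recorded,
\begin{equation*}
\lim_{A\to\infty}\bigl((C-B)F + 2\mathcal{P}_v(u_0,v_0)\bigr) = -8\sqrt{-BC},\qquad
\lim_{A\to\infty}\bigl((C-B)F - 2\mathcal{P}_v(u_0,v_0)\bigr) = 8\sqrt{-BC}.
\end{equation*}
Hypothesis (a) of \eqref{eq:a_and_b} gives $B<0<C$, so $-BC>0$ and $\sqrt{-BC}$ is a strictly positive real number; the two limits are therefore finite and of opposite signs, with product $64BC<0$. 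By continuity in $A$, there is some $A_{0}$ such that for every $A>A_{0}$ the first factor is negative, the second is positive, and hence $\Delta<0$.

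Next I would translate $\Delta<0$ into the spectral statement: the Jacobian $J_{(u_0,v_0)}$ has a non-real complex conjugate pair of eigenvalues $\mu\pm i\nu$ with $\nu>0$, where $\mu=\tfrac{1}{2}(B+C)F$. If $\mu\neq 0$, then by \cite[Theorem~4, p.~143]{Per01} the point $(u_0,v_0)$ is a hyperbolic focus (stable when $\mu<0$, unstable when $\mu>0$). If $\mu=0$, then the linearisation is a center and \cite[Corollary to Theorem~5, p.~145]{Per01} supplies the topological dichotomy that $(u_0,v_0)$ is either a center or a focus of the full nonlinear system. Combining the two cases yields the stated conclusion.

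The hard part will really only be the careful asymptotic analysis of the two limits: both $F$ and $\mathcal{P}_v(u_0,v_0)$ are rational expressions in $A$ whose denominator $D^{2} = B-2A-C+2\sqrt{-A}\sqrt{B-A-C}$ tends to $-\infty$, so one must extract the dominant $A$-powers with care before taking the quotient limit. The center-focus dichotomy in the degenerate case $\mu=0$ cannot be refined without a further Lyapunov-coefficient computation, but the statement as written does not demand that refinement.
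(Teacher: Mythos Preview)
Your proposal is correct and follows essentially the same approach as the paper: the paper has already computed the factorisation of $\Delta$ and the two limits $\pm 8\sqrt{-BC}$ in the paragraphs preceding the theorem, and then cites exactly the same results from Perko (\cite[Theorem~4, p.~143]{Per01} and \cite[Corollary to Theorem~5, p.~145]{Per01}) to conclude. You have simply made explicit the continuity argument and the case split on $\mu$ that the paper leaves implicit, but there is no substantive difference in method.
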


\begin{remark}
    If the real part of the eigenvalue $\mu > 0,$ then the singular point $(u_0, v_0)$ is an unstable focus and if $\mu < 0,$ then the singular point $(u_0, v_0)$ is a stable focus, see \cite[Theorem 4, Page-143]{Per01}.
\end{remark}

\begin{example}
We compute the discriminant and also determine the nature of the extra singularities when $A = 5,$ $B = -1$ and $C = 2.$ In this case, the computation gives that $(B + C)F = -0.0001$ and $\Delta = -123.54.$

 From \eqref{eq:characteristic_equation}, we see that the eigenvalues are given by 
 $$ \frac{(B + C)F \pm \sqrt{\Delta}}{2}.         $$
Therefore, the eigenvalues are complex conjugate with a negative real part. This implies that the singular point is a stable focus. A similar calculation will also show that the remaining singularities belonging to the interior of $D^2$ are also stable foci. In Figure   \ref{fig:fig5}, we plot the phase portrait of this case.
\end{example}

\begin{figure}[H]
    \centering
    \includegraphics[scale=0.32]{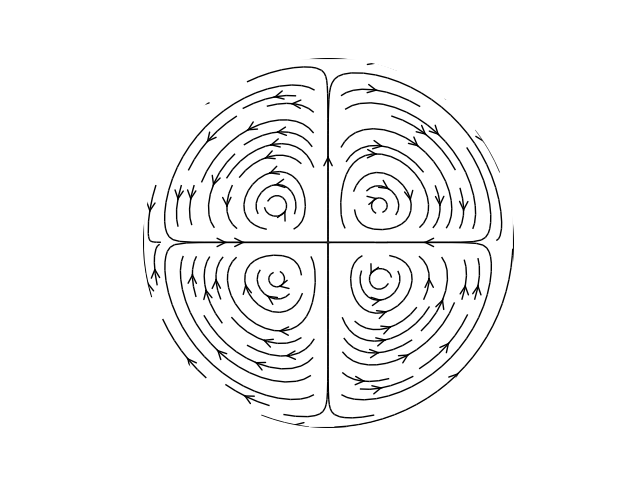}
    \caption{When $A = 5$ and $B = -1$, and $C = 2$.}
    \label{fig:fig5}
\end{figure}

\noindent {\bf Acknowledgments.}
The first author was supported by a Senior Research Fellowship of the University Grants Commission of India for the duration of this work. The second
author is supported by the Prime Minister’s Research Fellowship, Government of India. The third author thanks `ICSR office IIT Madras' for SEED research grant.

\bibliography{Ref_InvariantAlgebraicSets.bib}
\bibliographystyle{abbrv}

\end{document}